\definecolor{verylight}{gray}{0.97}
\definecolor{light}{gray}{0.9}
\definecolor{medium}{gray}{0.85}
\definecolor{dark}{gray}{0.6}
 \def\KK{{\NZQ K}}
 \def\G{{\mathcal G}}
 \def\0b{{\mathbf 0}}
\def\reg{{\mathbf reg}}
\def\KK{{\mathbb K}}
\def\height{\operatorname{ht}}
\def\depth{\operatorname{depth}}
 \def\opn#1#2{\def#1{\operatorname{#2}}} 
 \opn\chara{char} \opn\length{\ell} \opn\pd{pd} \opn\rk{rk}
 \opn\projdim{proj\,dim} \opn\injdim{inj\,dim} \opn\rank{rank}
 \opn\depth{depth} \opn\grade{grade} \opn\height{height}
 \opn\embdim{emb\,dim} \opn\codim{codim}
 \opn\Tr{Tr} \opn\bigrank{big\,rank}
 \opn\superheight{superheight}\opn\lcm{lcm}
 \opn\trdeg{tr\,deg}
 \opn\reg{reg} \opn\lreg{lreg} \opn\ini{in} \opn\lpd{lpd}
 \opn\size{size} \opn\sdepth{sdepth}
 \opn\link{link}\opn\fdepth{fdepth}\opn\lex{lex}
 \opn\tr{tr}
 \opn\type{type}
 \opn\gap{gap}
 \opn\arithdeg{arith-deg}
 \opn\HS{HS}
 \opn\GL{GL}
 \opn\div{div} \opn\Div{Div} \opn\cl{cl} \opn\Cl{Cl}
 \opn\Spec{Spec} \opn\Supp{Supp} \opn\supp{supp} \opn\Sing{Sing}
 \opn\Ass{Ass} \opn\Min{Min}\opn\Mon{Mon}
 \opn\Ann{Ann} \opn\Rad{Rad} \opn\Soc{Soc}\opn\Deg{Deg}
 \opn\Im{Im} \opn\Ker{Ker} \opn\Coker{Coker} \opn\Am{Am}
 \opn\Hom{Hom} \opn\Tor{Tor} \opn\Ext{Ext} \opn\End{End}
 \opn\Aut{Aut} \opn\id{id}
 \opn\nat{nat}
 \opn\pff{pf}
 \opn\Pf{Pf} \opn\GL{GL} \opn\SL{SL} \opn\mod{mod} \opn\ord{ord}
 \opn\Gin{Gin} \opn\Hilb{Hilb}\opn\sort{sort}
 \opn\PF{PF}\opn\Ap{Ap}
 \opn\mult{mult}
 \opn\bight{bight}
 \opn\match{match}
\opn\St{St}
 \opn\aff{aff}
 \opn\relint{relint} \opn\st{st}
 \opn\lk{lk} \opn\cn{cn} \opn\core{core} \opn\vol{vol}  \opn\inp{inp} \opn\nilpot{nilpot}
 \opn\link{link} \opn\star{star}\opn\lex{lex}\opn\set{set}
 \opn\width{wd}
 \opn\Fr{F}
 \opn\QF{QF}
 \opn\G{G}
 \opn\type{type}\opn\res{res}
 \opn\conv{conv}
 \opn\Ind{Ind}
 \opn\gr{gr}
 \def\pot#1#2{#1[\kern-0.28ex[#2]\kern-0.28ex]}
 \opn\dirlim{\underrightarrow{\lim}}
 \opn\inivlim{\underleftarrow{\lim}}
 \def\Implies{\ifmmode\Longrightarrow \else
         \unskip${}\Longrightarrow{}$\ignorespaces\fi}
 \def\implies{\ifmmode\Rightarrow \else
         \unskip${}\Rightarrow{}$\ignorespaces\fi}
 \def\iff{\ifmmode\Longleftrightarrow \else
         \unskip${}\Longleftrightarrow{}$\ignorespaces\fi}
 \newtheorem{Theorem}{Theorem}[section]
 \newtheorem{Lemma}[Theorem]{Lemma}
 \newtheorem{Corollary}[Theorem]{Corollary}
 \newtheorem{Proposition}[Theorem]{Proposition}
 \newtheorem{Remark}[Theorem]{Remark}
 \newtheorem{Example}[Theorem]{Example}
 \newtheorem{Definition}[Theorem]{Definition}
 \newtheorem{Question}[Theorem]{Question}
\newcommand\calA{\mathcal{A}}
\newcommand\calC{\mathcal{C}}
\newcommand\calJ{\mathcal{J}}
 \let\epsilon\varepsilon
 \let\kappa=\varkappa
 \def\qed{\ifhmode\textqed\fi
       \ifmmode\ifinner\quad\qedsymbol\else\dispqed\fi\fi}
 \def\textqed{\unskip\nobreak\penalty50
        \hskip2em\hbox{}\nobreak\hfil\qedsymbol
        \parfillskip=0pt \finalhyphendemerits=0}
 \def\dispqed{\rlap{\qquad\qedsymbol}}
 \opn\dis{dis}
 \def\pnt{{\raise0.5mm\hbox{\large\bf.}}}
 \opn\Lex{Lex}
\begin{document}

\title{Castelnuovo-Mumford regularity of generalized binomial edge ideals of graphs}

\author{Dariush Kiani, Sara Saeedi Madani and Guangjun Zhu}



\address{Dariush Kiani, Department of Mathematics and Computer Science, Amirkabir University of Technology (Tehran Polytechnic), Tehran, Iran}
\email{dkiani@aut.ac.ir}

\address{Sara Saeedi Madani, Department of Mathematics and Computer Science, Amirkabir University of Technology (Tehran Polytechnic), Tehran, Iran and School of Mathematics, Institute for Research in Fundamental Sciences, Tehran, Iran}
\email{sarasaeedi@aut.ac.ir}

\address{Guangjun Zhu, School of Mathematical Sciences, Soochow University, Suzhou, Jiangsu, 215006, P.R. China}
\email{zhuguangjun@suda.edu.cn}

\thanks{The second author was  supported in part by a grant from IPM (No. 1404130019) and the third author is supported by the Natural Science Foundation of Jiangsu Province
(No. BK20221353) and the National Natural Science Foundation of China (No.12471246). }

\maketitle
\begin{abstract}
In this paper, we mainly study the Castelnuovo-Mumford regularity of the generalized binomial edge ideals of graphs. We show that this number can be any integer number from $2$ to $n-1$ where $n$ is the number of vertices in the underlying graph. We are able to show this, after giving some tight lower and upper bounds for the regularity of generalized binomial edge ideals of the join product of graphs. In particular, we characterize all generalized binomial edge ideals with the regularity equal to~$2$ as well as extremal Gorenstein ideals. For this purpose, we give a new combinatorial characterization for the class of $P_4$-free graphs. 
\end{abstract}


\thanks{2020 {\em Mathematics Subject Classification}. 05E40, 13D02.}

\thanks{Keywords:  Generalized binomial edge ideals, Castelnuovo--Mumford regularity, join product of graphs.}

\maketitle

\section{Introduction}
\label{introduction}

Binomial edge ideals were defined independently in \cite{HHHKR} and \cite{O} and later became a popular topic of study in combinatorial commutative algebra. Let $G$ be a finite nonempty simple graph on the vertex set $V(G)$ and the edge set $E(G)$. Throughout this paper, all graphs are finite nonempty simple graphs, and if a graph has $n$ vertices, then we may use $[n]=\{1,\ldots,n\}$ to denote its set of vertices. Let $S=\KK[x_1,\ldots,x_n,y_1,\ldots,y_n]$ be the polynomial ring with $2n$ variables over a field $\KK$. Then the \emph{binomial edge ideal} of $G$, denoted by $J_G$, is defined as follows:
\[
J_G=(f_{ij}: 1\leq i<j\leq n, \{i,j\}\in E(G))
\]
where $f_{ij}=x_iy_j-x_jy_i$. If $G$ is a complete graph with $n$ vertices, then $J_G$ is indeed a determinantal ideal. Binomial edge ideals have been studied extensively by several authors, see for example \cite{BN, BMS, EHH, EHH1, EZ, HHHKR, HKS, KS2, MM, O, RS, RSK1, RSK2, RSK3, RSK4, SK, SK3, SZ}.

Later on, a generalization of binomial edge ideals, called the \emph{binomial edge ideal of a pair of graphs}, was defined in \cite{EHHQ}. Let $G_1$ and $G_2$ be two graphs on the vertex sets $[m]$ and $[n]$, respectively, with $m,n \geq 2$. Let $X=(x_{ij})$ be an $(m\times n)$-matrix of variables and let $S=\KK[x_{ij}:(i,j)\in [m]\times [n]]$ be the polynomial ring with variables~$x_{ij}$ where $K$ is a field. Then, let
\[
p_{e,f}=x_{it}x_{j\ell}-x_{i\ell}x_{jt}
\]
be a binomial in $S$ where $i<j$, $t<\ell$, $e=\{i,j\}\in E(G_1)$ and $f=\{t,\ell\}\in E(G_2)$. Then, the binomial edge ideal of the pair of graphs $(G_1,G_2)$, denoted by $J_{G_1,G_2}$, is defined as follows:
\[
J_{G_1,G_2}=(p_{e,f}: e\in E(G_1), f\in E(G_2)).
\]
If $G_1$ is just the complete graph on two vertices~$K_2$, then $J_{G_1,G_2}$ is just the classical binomial edge ideal of $G_2$. These very general types of ideals also have been considered by some authors, see for example \cite{EHHQ} and \cite{SK1}.

The case where $G_1$ is a complete graph $K_m$ for $m\geq 3$ has also been of considerable interest in several research papers in this area, as it is closer to the classical binomial edge ideal. Therefore, it is also known by the specific name of \emph{generalized binomial edge ideal}. In particular, the generalized binomial edge ideal of a graph $G$ on $[n]$ is denoted by $J_{K_m,G}$ in the polynomial ring $S$ which has also been considered by several authors, see for example \cite{IrCh, Ku1, Rauh, Zhu1, Zhu2, Zhu3}.

In the present paper, we are also interested in generalized binomial edge ideals of graphs and our focus is on the Castelnuovo-Mumford regularity (or regularity for short) of such ideals. We explain our main goals in more detail below.

Finding explicit formulas for the regularity of the generalized binomial edge ideal of a graph has been of interest to many authors and there are several results for certain classes of graphs, see for example~\cite{IrCh}, \cite{Ku1} and \cite{Zhu1}. On the other hand, we know from \cite[Theorem~3.6]{Ku1} that
\[
1\leq \reg S/J_{K_m,G}\leq n-1,
\]
for any connected graph $G$ with $n$ vertices and $m\geq 3$. A natural question is whether all integers in the above range are reached by some generalized binomial edge ideal. An analogous question was considered and answered in \cite{SK3} for $m=2$. More precisely, we may ask the following question about generalized binomial edge ideals.

\begin{Question}\label{any regularity}
 Given integers $n$ and $r$ with $1\leq r \leq n-1$, is there a connected graph $G$ on $n$ vertices for which $\reg S/J_{K_m,G}=r$ for some $m\geq 3$?
\end{Question}

If $r=1$, then the answer is positive. Even more, in \cite[Theorem~10]{SK1}, all pairs of graphs $G_1$ and $G_2$ were characterized for which $\reg(S/J_{G_1,G_2})=1$. Based on that characterization, we have $\reg(S/J_{K_m,G})=1$ if and only if $G=K_2$. For $r=n-1$, several cases are given in \cite{Ku1}, but so far there is no complete characterization. In this paper, we answer Question~\ref{any regularity} for the remaining cases, namely, $2\leq r\leq n-2$.

Motivated by the aforementioned explicit combinatorial characterization for the smallest possible value, one may ask for a characterization in each possible value for the regularity. More precisely, we ask the following general question.

\begin{Question}\label{regularity characterization}
Let $n\geq 4$. Suppose the answer to Question~\ref{any regularity} is positive for an integer $r$ with $2\leq r \leq n-1$. Then is there an explicit and combinatorial characterization for graphs $G$ such that $\reg S/J_{K_m,G}=r$ for some $m\geq 3$?
\end{Question}

As the next step after the case $r=1$, it is reasonable to consider the case $r=2$ which is one of our main goals in this paper. For the case $m=2$, such a characterization has been given in \cite{SK3}.



\medskip
This paper is organized as follows. In Section~\ref{join}, we answer Question~\ref{any regularity} for $3\leq r\leq n-2$. Indeed, we do much more, by considering a nice construction from graph theory, called the join product of graphs. We prove some tight lower and upper bounds for the regularity of the generalized binomial edge ideal of the join product of two graphs and discuss certain cases for which the exact value of the regularity can be computed. Then, as an application of the results of this section, we settle Question~\ref{any regularity} for $3\leq r\leq n-2$. In Section~\ref{P4 free section}, we focus on the interesting class of $P_4$-free graphs, also known as cographs, i.e. graphs with no induced subgraph isomorphic to the path $P_4$. Our approach in that section is totally combinatorial and we give a new combinatorial characterization for $P_4$-free graphs. This characterization, which is by its own interesting in graph theoretical point of view, will be then crucial to prove our results in Section~\ref{section reg 2}. Then, in Section~\ref{section reg 2}, we give a positive answer to Question~\ref{regularity characterization} for $r=2$ which, in particular, together with our results in Section~\ref{join}, puts an end to Question~\ref{any regularity}. In Section~\ref{extremal}, we consider generalized binomial edge ideals of regularity~$2$ and characterize those with the Cohen-Macaulay property which leads us to a characterization of all extremal Gorenstein generalized binomial edge ideals of graphs. In Section~\ref{Further remarks}, we conclude the paper with some related questions and remarks for future work.

\section{Regularity of the generalized binomial edge ideal of the join product}
\label{join}

In this section, we first consider the join product of two graphs $G_1$ and $G_2$ and give tight lower and upper bounds for the regularity of the generalized binomial edge ideal of the join product. We discuss the equality in the bounds in several cases. Then, as an application of our results, we give a positive answer to Question~\ref{any regularity} for $3\leq r\leq n-2$. First, we recall some notation and facts that we will need later in the section.

Let $G$  be a simple graph with the vertex set $V(G)$ and the edge set $E(G)$. For $W\subseteq V(G)$, we denote by $G[W]$ the \emph{induced subgraph} of $G$ on the vertex set $W$, i.e., for $u,v \in W$, $\{u,v\} \in E(G[W])$ if and only if $\{u,v\}\in E(G)$, and we also denote by $G\setminus W$, the induced subgraph
of  $G$  on $V(G) \setminus W$. For  $v\in V(G)$, $G\setminus v$ denotes the induced subgraph of $G$ on the vertex set $V (G)\setminus \{v\}$.  Let $c(G)$ denote the number
of connected components of $G$. A vertex $v$ is called a \emph{cut vertex} of $G$ if $c(G)< c(G\setminus v)$.
For  $T\subset V(G)$, we set $\overline{T}=V(G)\setminus T$. If  $v$ is a cut vertex of the graph $G[\overline{T} \cup \{v\}]$ for any $v\in T$,
then we say that $T$  has the \emph{cut vertex property}. Set \[
\calC(G)=\{\emptyset\}\cup\{T: T \text{\ has the cut vertex property}\}
\]
and
\[
\overline{\calC}(G)=\calC(G)\setminus \{\emptyset\}.
\]
Let   $\widetilde{G}$ denote the complete graph on $V(G)$, and let $c(T)$ denote the number of connected components of $G[\overline{T}]$.


Let $G$ be a simple  graph with the  vertex set $[n]$.  For each  $T\subseteq V(G)$,  we can introduce the following ideal in $S=\KK[x_{ij}: 1\le i\le m, 1\le j\le n]$
\[
    P_T(K_m, G)=(x_{ij}: (i,j)\in [m]\times T)+J_{K_m,\widetilde{G}_1}+\cdots+J_{K_m,\widetilde{G}_{c(T)}},
\]
where $G_1,\ldots, G_{c(T)}$ are the connected components of $G[\overline{T}]$.
It was shown in \cite[Theorem~7]{Rauh} that $T\in \calC(G)$ if and only if $P_T(K_m,G)$ is a minimal prime ideal of $J_{K_m,G}$, and hence
\[
J_{K_m,G}=\bigcap_{T\in \calC(G)}P_T(K_m,G).
\]

 \medskip
\begin{Definition}\label{join-def}
Let  $r\ge 2$ be an integer, and let $G_1,\ldots,G_r$ be  graphs on   pairwise  disjoint vertex sets $V_1,\ldots, V_r$, respectively. The join product of  $G_1,\ldots,G_r$, denoted by  $G_1*G_2*\cdots*G_r$,  is a graph on the vertex set $V_1\cup \cdots \cup V_r$,  whose  edge set is
\[
E(G_1)\cup\cdots \cup E(G_r)\cup \{\{x, y\}: x\in V_i, y\in  V_{j}\text {\ with\ }i\ne j\}.
\]
\end{Definition}


Let $V$ be a set and $\calA_1,\ldots,\calA_t$ be $t$ collections of subsets of $V$. The \emph{join} of $\calA_1,\ldots,\calA_t$, denoted by $\bigcirc_{i=1}^t\calA_i$,
was introduced in \cite{KS1} as
\[
\{A_1\cup A_2\cup\cdots\cup A_t: A_i\in \calA_i \text{\ for\ } i=1,\ldots,t\}.
\]
By convention, $\bigcirc_{i=1}^t\calA_i=\emptyset$ if there exists some $i$ for which $\calA_i=\emptyset$.

\medskip
The next proposition describes the minimal prime ideals of the generalized binomial edge ideals of the join product of two graphs.
If $G$ is a graph with connected components $G_1,\ldots,G_r$, then we write $G$ as  $G=\bigsqcup_{i=1}^r G_i$.

\begin{Proposition}\label{both disconnected 1}
	\cite[Proposition~4.14]{KS1}
	Let $G_1=\bigsqcup_{i=1}^r G_{1i}$ and $G_2=\bigsqcup_{i=1}^s G_{2i}$ be two graphs on disjoint sets of vertices
	$V_1=\bigcup_{i=1}^r V_{1i}$ and $V_2=\bigcup_{i=1}^s V_{2i}$,
	respectively, where $r,s\geq 2$. Then
	$$\calC(G_1*G_2)=\{\emptyset\}\cup \big{(}(\bigcirc_{i=1}^{r}\calC(G_{1i}))\circ V_2 \big{)}\cup \big{(}(\bigcirc_{i=1}^{s}\calC(G_{2i}))\circ V_1 \big{)}.$$
\end{Proposition}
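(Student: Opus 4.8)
The plan is to prove the set equality directly from the definition of the cut vertex property, bypassing the prime decomposition. Since every set appearing in the two displayed families on the right-hand side contains $V_2$ or $V_1$ and is therefore nonempty, it suffices to identify the nonempty part $\overline{\calC}(G_1*G_2)$ with their union and then adjoin $\emptyset$. Write $G=G_1*G_2$ and, for a candidate $T\subseteq V_1\cup V_2$, split it as $T=T_1\cup T_2$ with $T_i=T\cap V_i$, and set $A=V_1\setminus T_1$ and $B=V_2\setminus T_2$. The single structural fact driving everything is that, because of the complete set of join edges between $V_1$ and $V_2$, the induced subgraph $G[W]$ is connected as soon as $W$ meets both $V_1$ and $V_2$, whereas if $W\subseteq V_i$ then $G[W]=G_i[W]$. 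I would record this as a one-line preliminary observation and then proceed in three steps.

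\emph{Step 1 (locating $T$).} Using the observation, if $A\neq\emptyset$ and $B\neq\emptyset$ then for every $v\in T$ both $G[\overline{T}\cup\{v\}]$ and $G[\overline{T}]$ meet both sides and are connected, so $v$ is never a cut vertex and $T\notin\overline{\calC}(G)$. Hence a nonempty $T$ with the cut vertex property must satisfy $A=\emptyset$ or $B=\emptyset$, i.e. $T_1=V_1$ or $T_2=V_2$; and the full set $T=V_1\cup V_2$ is excluded directly, since then $\overline{T}=\emptyset$, so $G[\overline{T}\cup\{v\}]$ is a single vertex and deleting $v$ \emph{decreases} the component count. This confines $\overline{\calC}(G)$ to the two symmetric families $T=T_1\cup V_2$ with $T_1\subsetneq V_1$ and $T=V_1\cup T_2$ with $T_2\subsetneq V_2$.

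\emph{Step 2 (identifying the family $T=T_1\cup V_2$).} For such $T$ we have $\overline{T}=A\subseteq V_1$, and I would test the cut vertex condition vertex by vertex. For $v\in T_1$ the graph $G[\overline{T}\cup\{v\}]=G_1[A\cup\{v\}]$ lives inside $V_1$, so the condition becomes exactly that $T_1$ has the cut vertex property in $G_1$; since $G_1=\bigsqcup_{i=1}^r G_{1i}$ and whether $v\in V_{1i}$ is a cut vertex of an induced subgraph depends only on its part in the component $G_{1i}$, this is equivalent to $T_{1i}\in\calC(G_{1i})$ for every $i$, that is $T_1\in\bigcirc_{i=1}^r\calC(G_{1i})$. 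For $v\in V_2$, instead, $G[\overline{T}\cup\{v\}]$ meets both sides and is connected while $G[\overline{T}]=G_1[V_1\setminus T_1]$, so $v$ is a cut vertex precisely when $G_1[V_1\setminus T_1]$ is disconnected. The symmetric analysis handles $T=V_1\cup T_2$.

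\emph{Step 3 (the crux) and conclusion.} The delicate point, and the step I expect to be the main obstacle, is reconciling the two conditions from Step 2: the vertices of $V_2$ demand that $G_1[V_1\setminus T_1]$ be disconnected, and one must see that this is automatic once $T_1\in\bigcirc_{i=1}^r\calC(G_{1i})$. Here the hypothesis $r\geq 2$ is essential, together with the elementary fact that a full vertex set $V_{1i}$ never has the cut vertex property, so every member of $\calC(G_{1i})$ is a proper subset of $V_{1i}$. Consequently each $V_{1i}\setminus T_{1i}$ is nonempty, and with $r\geq 2$ disjoint components the graph $G_1[V_1\setminus T_1]=\bigsqcup_{i=1}^r G_{1i}[V_{1i}\setminus T_{1i}]$ has at least two components, yielding the required disconnectedness for free and simultaneously forcing $T_1\subsetneq V_1$. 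Combining Steps 1--3 shows that $\overline{\calC}(G)$ equals $\big((\bigcirc_{i=1}^r\calC(G_{1i}))\circ V_2\big)\cup\big((\bigcirc_{i=1}^s\calC(G_{2i}))\circ V_1\big)$, where $\calA\circ V:=\{A\cup V: A\in\calA\}$; adjoining $\emptyset$ gives the claimed description of $\calC(G_1*G_2)$.
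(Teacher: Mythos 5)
Your proof is correct, but there is nothing in the paper to compare it against: the paper states Proposition~\ref{both disconnected 1} as an imported result, citing \cite[Proposition~4.14]{KS1} without reproducing any argument (legitimately, since $\calC(G)$ is a purely graph-theoretic object, independent of $m$, so the computation from the binomial edge ideal setting of \cite{KS1} applies verbatim). Your proposal therefore supplies a self-contained, elementary proof where the paper has none, and each of its three pillars checks out. The preliminary observation---an induced subgraph of $G_1*G_2$ meeting both $V_1$ and $V_2$ is connected, while one contained in a single $V_i$ coincides with the corresponding induced subgraph of $G_i$---correctly drives Step~1: if $\overline{T}$ meets both sides, then $G[\overline{T}\cup\{v\}]$ and $G[\overline{T}]$ are both connected, so no $v\in T$ can be a cut vertex, and $T=V_1\cup V_2$ is ruled out because deleting the unique vertex of a one-vertex graph lowers, rather than raises, the component count. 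Step~2 is sound because a vertex of a disjoint union is a cut vertex of the union if and only if it is a cut vertex of its own component, which converts the condition on the $T_1$-vertices into exactly $T_{1i}\in\calC(G_{1i})$ for all $i$, while the $V_2$-vertices require $G_1[V_1\setminus T_1]$ to be disconnected. Step~3 correctly resolves the only delicate point: since the full vertex set $V_{1i}$ never has the cut vertex property (the same one-vertex computation), every member of $\calC(G_{1i})$ is a proper subset of $V_{1i}$, so each $V_{1i}\setminus T_{1i}$ is nonempty, and $r\geq 2$ forces $G_1[V_1\setminus T_1]=\bigsqcup_{i=1}^r G_{1i}[V_{1i}\setminus T_{1i}]$ to be disconnected; this is precisely where the hypothesis $r,s\geq 2$ enters, and your proof makes that dependence visible. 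Since Steps~2--3 establish an equivalence rather than a single inclusion, both containments of the claimed set equality follow, and adjoining $\emptyset$ (disjointly, as the sets in both families contain $V_2$ or $V_1$ and are nonempty) completes the identification. What your approach buys is transparency---a reader of this paper can verify the decomposition of $\calC(G_1*G_2)$ without consulting \cite{KS1}---at the modest cost of a page of case analysis that the paper avoids by citation.
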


For each induced subgraph, the following proposition provides a useful formula.
\begin{Proposition}\label{induced}
	\cite[Proposition~8]{SK1}
	Let $G$ be a graph and $H$ be its induced subgraph. Then $\reg (S/J_{K_m,H})\leq \reg (S/J_{K_m,G})$.
\end{Proposition}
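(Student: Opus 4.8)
The plan is to prove the inequality one vertex at a time. Since $H=G[W]$ is obtained from $G$ by deleting the vertices of $V(G)\setminus W$ one after another, and the relation ``being an induced subgraph'' is transitive, it suffices to establish $\reg(S'/J_{K_m,G\setminus v})\le\reg(S/J_{K_m,G})$ for the deletion of a single vertex $v$, where $S'=\KK[x_{ij}:i\in[m],\,j\ne v]$; iterating then gives the general statement. The key starting observation is that setting the whole column of $v$ to zero realizes the smaller quotient inside the bigger one: if $\pi\colon S\to S'$ is the $\KK$-algebra map with $\pi(x_{iv})=0$ and $\pi(x_{ij})=x_{ij}$ for $j\ne v$, then a generator $p_{e,f}$ with $f=\{t,\ell\}$ is sent to $0$ precisely when $v\in\{t,\ell\}$ (both of its monomials then contain a variable of column $v$) and is preserved otherwise, so that $\pi(J_{K_m,G})=J_{K_m,G\setminus v}$. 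Consequently
\[
S/\bigl(J_{K_m,G}+(x_{1v},\dots,x_{mv})\bigr)\ \cong\ S'/J_{K_m,G\setminus v},
\]
and since the killed variables do not occur on the right, the two sides have the same regularity. Moreover, because every edge of $G\setminus v$ is an edge of $G$, the inclusion $S'\hookrightarrow S$ carries $J_{K_m,G\setminus v}$ into $J_{K_m,G}$, so $S'/J_{K_m,G\setminus v}$ is in fact a graded algebra retract of $S/J_{K_m,G}$; this is the conceptual reason to expect that passing to it cannot raise the regularity.

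With this reduction in hand, I would prove $\reg\bigl(S/(J_{K_m,G}+(x_{1v},\dots,x_{mv}))\bigr)\le\reg(S/J_{K_m,G})$ by adjoining the $m$ column variables one at a time. For a homogeneous ideal $I$ and a variable $x$, the short exact sequence
\[
0\longrightarrow\bigl(S/(I:x)\bigr)(-1)\xrightarrow{\ \cdot x\ }S/I\longrightarrow S/(I+(x))\longrightarrow 0
\]
gives the standard bound $\reg\bigl(S/(I+(x))\bigr)\le\max\{\reg(S/I),\reg(S/(I:x))\}$. Applying this successively to $I_0=J_{K_m,G}$ and $I_k=I_{k-1}+(x_{kv})$, the desired inequality follows as soon as one knows $\reg(S/(I_{k-1}:x_{kv}))\le\reg(S/I_{k-1})$ at every step, and then inductively $\le\reg(S/J_{K_m,G})$.

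The main obstacle is exactly this control of the colon ideals, and it is genuinely the heart of the matter because the column variables are zerodivisors: $x_{iv}$ belongs to every minimal prime $P_T(K_m,G)$ with $v\in T$, and such $T\in\calC(G)$ do exist whenever some set containing $v$ has the cut vertex property (for instance when $v$ is a cut vertex). Thus one cannot dispose of the $x_{iv}$ as a regular sequence and quote the trivial fact that quotienting by a nonzerodivisor preserves regularity. Instead I would try to compute each colon $(I_{k-1}:x_{kv})$ explicitly and recognize it, modulo the linear forms $x_{1v},\dots,x_{(k-1)v}$ already adjoined, as a generalized binomial edge ideal of a graph built from $G$ by deleting $v$ and completing part of its neighborhood---the generalized analogue of the Ohtani-type colon computations known for ordinary binomial edge ideals. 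Such a graph has one fewer vertex, so an induction on $|V(G)|$ (organized so that the colon ideals that appear are again of this form) would bound every $\reg(S/(I_{k-1}:x_{kv}))$ by $\reg(S/J_{K_m,G})$ and close the argument. Making the colon description precise and verifying that the induction is well-founded is where the real effort lies; the retract picture of the first paragraph is what ensures that the resulting bound is sharp, namely $\reg(S/J_{K_m,G})$ itself rather than something larger.
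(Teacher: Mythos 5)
Your structural observations are all correct---$\pi(J_{K_m,G})=J_{K_m,G\setminus v}$, the isomorphism $S/(J_{K_m,G}+(x_{1v},\dots,x_{mv}))\cong S'/J_{K_m,G\setminus v}$, the retract, and the exact-sequence bound $\reg(S/(I+(x)))\le\max\{\reg(S/I),\reg(S/(I:x))\}$---but what you have written is a program, not a proof, and the step you defer is the whole content. Concretely: (i) there is no general inequality $\reg(S/(I:\ell))\le\reg(S/I)$ for a linear form $\ell$ that you could quote; in fact, by your own exact sequence, bounding $\reg(S/(I:x))$ and bounding $\reg(S/(I+(x)))$ by $\reg(S/I)$ are essentially equivalent problems, so nothing standard closes the loop. (ii) The hoped-for Ohtani-type formula for $(J_{K_m,G}+(x_{1v},\dots,x_{(k-1)v})):x_{kv}$ is neither stated precisely nor proved, and no such formula for generalized binomial edge ideals is available off the shelf. (iii) Even granting the expected description, the graph that appears is obtained by \emph{completing} part of the neighborhood of $v$; it is not an induced subgraph of $G$, so neither the proposition being proved nor an induction over induced subgraphs applies to it, and bounding its regularity by $\reg(S/J_{K_m,G})$ is a new, substantive claim (regularity is not monotone under adding edges). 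Finally, the retract observation alone yields nothing: $S/J_{K_m,G}$ is not a finitely generated $S'$-module, so splitting off $S'/J_{K_m,G\setminus v}$ as a direct summand over $S'/J_{K_m,G\setminus v}$ gives no comparison between Betti numbers over $S'$ and Betti numbers over $S$.

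For comparison, the proof behind the citation (Proposition~8 of the paper of Saeedi Madani and Kiani, in the spirit of Matsuda--Murai) avoids colon ideals entirely and proves the stronger statement $\beta_{ij}(S_H/J_{K_m,H})\le\beta_{ij}(S/J_{K_m,G})$ for all $i,j$. Give $S$ the $\ZZ^n$-grading with $\deg x_{ij}=e_j$, so $J_{K_m,G}$ is multigraded. Your identity $\pi(J_{K_m,G})=J_{K_m,H}$ is exactly what shows that $(J_{K_m,G})_a=(J_{K_m,H})_a$ for every multidegree $a$ with $\supp(a)\subseteq V(H)$: any multihomogeneous $f\in J_{K_m,G}$ of such a degree satisfies $f=\pi(f)\in\pi(J_{K_m,G})=J_{K_m,H}$. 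Consequently, in each such multidegree the strand of the Koszul complex of $S$ tensored with $S/J_{K_m,G}$ coincides with the corresponding strand of the Koszul complex of $S_H$ tensored with $S_H/J_{K_m,H}$, whence $\beta_{i,a}(S/J_{K_m,G})=\beta_{i,a}(S_H/J_{K_m,H})$ for all $i$ and all $a$ supported on $V(H)$. Summing over multidegrees gives the Betti number inequality, hence the regularity inequality (and the projective dimension inequality too), with no induction, no colon ideals, and no neighborhood completions. If you want to repair your write-up, this is the way to do it: your projection $\pi$ is the right tool, but it should be fed into the multigraded Koszul complex rather than into colon ideals.
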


We also benefit from the following two propositions, one of which gives the exact value of the regularity for the generalized binomial edge ideal of a complete graph. They partially answer Question~\ref{any regularity}.

\begin{Proposition}\label{both complete}
	\cite[Proposition~3.3]{Ku1}
	Let $K_n$ be the complete graph with $n$ vertices. Then $\reg (S/J_{K_m,K_n})= \min \{m-1,n-1\}$.
\end{Proposition}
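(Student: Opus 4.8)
The plan is to recognise $J_{K_m,K_n}$ as a classical determinantal ideal and to read off its regularity from its Hilbert series. Since both $K_m$ and $K_n$ are complete, every pair $\{i,j\}$ of rows and every pair $\{t,\ell\}$ of columns contributes a generator, so
\[
J_{K_m,K_n}=I_2(X),
\]
the ideal of all $2\times 2$ minors of the generic matrix $X=(x_{ij})$ of size $m\times n$. Transposing $X$ interchanges $m$ and $n$ while fixing the set of $2\times 2$ minors, so $\reg(S/J_{K_m,K_n})$ is symmetric in $m$ and $n$; hence I may set $\mu:=\min\{m-1,n-1\}$ and aim to prove $\reg(S/I_2(X))=\mu$. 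I would then invoke the classical structure theory of determinantal rings: $R:=S/I_2(X)$ is a Cohen--Macaulay normal domain of Krull dimension $m+n-1$ (its codimension being $(m-1)(n-1)$), and it is precisely the homogeneous coordinate ring of the Segre embedding $\PP^{m-1}\times\PP^{n-1}\hookrightarrow\PP^{mn-1}$.

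The second step is to exploit Cohen--Macaulayness to convert the regularity into a purely numerical quantity. For a graded Cohen--Macaulay quotient $R$ of dimension $d$, only the top local cohomology $\Coh{d}{R}$ is nonzero, so
\[
\reg R=a(R)+d,
\]
where $a(R)=\max\{\,j:\ [\Coh{d}{R}]_j\neq 0\,\}$ is the $a$-invariant. Writing the Hilbert series as $H_R(t)=h(t)/(1-t)^{d}$ with $h(t)$ the $h$-polynomial, one has $a(R)=\deg H_R(t)=\deg h(t)-d$, and therefore $\reg R=\deg h(t)$. Thus the whole problem reduces to determining the degree of the $h$-polynomial of $R$.

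The final step is the computation of $h(t)$. The degree-$d$ component of the Segre coordinate ring is $\operatorname{Sym}^d(\KK^m)\otimes\operatorname{Sym}^d(\KK^n)$, so its Hilbert function equals $\binom{d+m-1}{m-1}\binom{d+n-1}{n-1}$, and a standard binomial identity gives
\[
\sum_{d\ge 0}\binom{d+m-1}{m-1}\binom{d+n-1}{n-1}t^d=\frac{\sum_{k=0}^{\mu}\binom{m-1}{k}\binom{n-1}{k}t^k}{(1-t)^{\,m+n-1}}.
\]
Hence $d=m+n-1$ as expected, and $h(t)=\sum_{k=0}^{\mu}\binom{m-1}{k}\binom{n-1}{k}t^k$ has degree exactly $\mu$, its top coefficient $\binom{m-1}{\mu}\binom{n-1}{\mu}$ being nonzero. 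Combining with $\reg R=\deg h(t)$ yields $\reg(S/J_{K_m,K_n})=\mu=\min\{m-1,n-1\}$. I expect the only genuine work to be the verification of the displayed Hilbert-series identity, i.e. pinning down the exact $h$-vector; an equivalent and perhaps cleaner route avoiding it is to compute the $a$-invariant directly from the canonical module of the Segre product. Writing $R$ as the Segre product of the polynomial rings $A$ and $B$ in $m$ and $n$ variables, the Goto--Watanabe formula gives $\omega_R=\omega_A\#\omega_B$; since $\omega_A=A(-m)$ and $\omega_B=B(-n)$, the lowest degree in which both survive is $\max\{m,n\}$, so $\operatorname{indeg}(\omega_R)=\max\{m,n\}$, whence $a(R)=-\max\{m,n\}$ and $\reg R=(m+n-1)-\max\{m,n\}=\min\{m,n\}-1$.
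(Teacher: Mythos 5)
Your proposal is correct: the identification $J_{K_m,K_n}=I_2(X)$, the Cohen--Macaulayness and dimension $m+n-1$ of the Segre coordinate ring, the reduction $\reg R=\deg h(t)$ (equivalently $a(R)+\dim R$), and the computation of the $h$-polynomial or of $a(R)=-\max\{m,n\}$ via Goto--Watanabe are all accurate and together yield $\min\{m-1,n-1\}$. The paper gives no proof of its own here --- it imports the statement from \cite[Proposition~3.3]{Ku1} --- and your argument is exactly the classical determinantal-ring reasoning that the cited source rests on, so this is essentially the same approach, written out in full.
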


\begin{Theorem}\label{Matsuda-Murai}
	\cite[Theorem~3.7]{Ku1}
	Let $G$ be a connected graph with $n$ vertices. If $m\geq n$, then
	\[
	\reg (S/J_{K_m,G})= n-1.
	\]
\end{Theorem}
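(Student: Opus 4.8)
The plan is to separate the two inequalities. The upper bound $\reg(S/J_{K_m,G})\le n-1$ is available for every connected graph on $n$ vertices (as recorded in the introduction via \cite{Ku1}), so the entire content is the reverse inequality $\reg(S/J_{K_m,G})\ge n-1$, and this is where the hypothesis $m\ge n$ must enter.

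My first step would be to isolate the determinantal minimal prime. Since $G$ is connected, $\emptyset\in\calC(G)$, and the corresponding minimal prime is the full determinantal ideal $P_\emptyset(K_m,G)=J_{K_m,\widetilde G}=I_2(X)$; by Proposition~\ref{both complete}, $\reg(S/I_2(X))=\reg(S/J_{K_m,K_n})=\min\{m-1,n-1\}=n-1$, precisely because $m\ge n$. From the containment $J_{K_m,G}\subseteq I_2(X)$ I would use the short exact sequence
\[
0\longrightarrow I_2(X)/J_{K_m,G}\longrightarrow S/J_{K_m,G}\longrightarrow S/I_2(X)\longrightarrow 0
\]
together with the standard inequality $\reg(S/I_2(X))\le\max\{\reg(S/J_{K_m,G}),\ \reg(I_2(X)/J_{K_m,G})-1\}$. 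This reduces the lower bound to the single claim that $\reg\bigl(I_2(X)/J_{K_m,G}\bigr)\le n-1$: granting it, the second entry of the maximum is at most $n-2$ while the left-hand side equals $n-1$, which forces $\reg(S/J_{K_m,G})\ge n-1$.

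To prove the claim $\reg(I_2(X)/J_{K_m,G})\le n-1$ I would add the non-edges of $G$ back one at a time, choosing a chain $G=G_0\subset G_1\subset\cdots\subset G_N=K_n$ with $|E(G_{i+1})|=|E(G_i)|+1$. The containments $J_{K_m,G_i}\subseteq J_{K_m,G_{i+1}}\subseteq I_2(X)$ give a filtration of $I_2(X)/J_{K_m,G}$ whose successive factors are $J_{K_m,G_{i+1}}/J_{K_m,G_i}$, so by the inequality $\reg B\le\max\{\reg A,\reg C\}$ it suffices to bound each factor by $n-1$. Each factor is generated by the classes of the $2\times 2$ minors supported on the two columns of the new edge $e$, so its regularity is governed by the colon ideal $(J_{K_m,G_i}:I_e)$, where $I_e$ denotes the ideal of those minors; this colon should be identified combinatorially with a generalized binomial edge ideal in which the endpoints of $e$ have had their neighborhoods completed into cliques, whereupon Proposition~\ref{both complete} supplies complete pieces of regularity at most $n-1$.

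The main obstacle is exactly this control of the auxiliary term: bounding $\reg(I_2(X)/J_{K_m,G})$ by $n-1$, equivalently proving that none of the colon ideals arising from the neighborhood completions pushes the regularity of the successive factors above $n-1$, so that the sequences force the maximal value $n-1$ rather than permitting an accidental drop. The hypothesis $m\ge n$ is what must be tracked throughout: it is precisely what guarantees that every complete subgraph produced along the way contributes a determinantal summand of regularity exactly $n-1$ via Proposition~\ref{both complete}, since dropping below it would let $\min\{m-1,n-1\}$ fall strictly under $n-1$.
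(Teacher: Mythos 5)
First, a point of comparison: the paper does not prove this statement at all --- it is quoted from Kumar \cite[Theorem~3.7]{Ku1} as a known result --- so your attempt has to stand on its own as a proof of Kumar's theorem; there is no in-paper argument to match it against. Your skeleton is logically sound as far as it goes: the upper bound $\reg(S/J_{K_m,G})\leq n-1$ is indeed \cite[Theorem~3.6]{Ku1}; since $G$ is connected, $P_\emptyset(K_m,G)=J_{K_m,K_n}=I_2(X)$ is a minimal prime of $J_{K_m,G}$, and Proposition~\ref{both complete} gives $\reg(S/I_2(X))=\min\{m-1,n-1\}=n-1$ precisely because $m\geq n$; and the inequality $\reg(S/I_2(X))\leq\max\{\reg(S/J_{K_m,G}),\,\reg(I_2(X)/J_{K_m,G})-1\}$ correctly reduces the lower bound to the single claim $\reg\bigl(I_2(X)/J_{K_m,G}\bigr)\leq n-1$.

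That claim, however, is a genuine gap, and it is where the entire difficulty of the theorem sits. Two concrete problems. (a) The successive factors $J_{K_m,G_{i+1}}/J_{K_m,G_i}$ in your filtration are generated by the $\binom{m}{2}$ minors on the two columns of the new edge $e$; they are not cyclic modules, so their regularity is not ``governed by the colon ideal $(J_{K_m,G_i}:I_e)$'' in any standard sense. The colon machinery you are implicitly invoking --- Ohtani's lemma, which identifies $(J_G:f_e)$, a colon by a \emph{single} binomial, with the binomial edge ideal of a graph with completed neighborhoods plus explicit extra generators --- is a theorem for the classical case $m=2$; no analogue for these non-cyclic factor modules is established in the literature or in your sketch, and producing one is nontrivial. (b) More seriously, your reduction aims at something strictly stronger than the theorem itself can justify. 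Every graph $G_i$ in your chain is connected (you only add edges), so even if one granted Kumar's theorem for all of them, the exact sequences $0\to J_{K_m,G_{i+1}}/J_{K_m,G_i}\to S/J_{K_m,G_i}\to S/J_{K_m,G_{i+1}}\to 0$ only yield $\reg(J_{K_m,G_{i+1}}/J_{K_m,G_i})\leq n$, not $\leq n-1$; hence $\reg(I_2(X)/J_{K_m,G})\leq n-1$ does not even follow from the statement being proved, and you offer no independent evidence that it holds. Since you yourself flag this bound as ``the main obstacle,'' the proposal should be read as a reduction strategy with an open key step, not as a proof.
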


We would like to remark that if, more generally, $G$ is a graph with $n$ vertices with $m\geq n$ consisting of $r$ connected components, then
\[
\reg (S/J_{K_m,G})= n-r\leq n-1.
\]

\begin{Lemma}  {\em \cite[Corollary 18.7]{P}}
	\label{exact}
	Let $0\longrightarrow M\longrightarrow N\longrightarrow P\longrightarrow 0$ be a short exact
	sequence of finitely generated graded S-modules. Then we have
	\[
\reg(M)\leq \max\{\reg(N), \reg(P)+1\}.
\]
\end{Lemma}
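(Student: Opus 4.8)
This is a standard fact about Castelnuovo--Mumford regularity (it is quoted above from \cite[Corollary~18.7]{P}), so the plan is to deduce it from one of the two usual cohomological descriptions of regularity, applied to the long exact sequence attached to the given short exact sequence. I would work with the local cohomology characterization: for a finitely generated graded $S$-module $L$,
\[
\reg L=\max\{\,i+j : \Coh{i}{L}_j\neq 0\,\},
\]
where $\mm$ is the graded maximal ideal of $S$ and the maximum runs over all cohomological degrees $i$ and internal degrees $j$, with the convention $\reg L=-\infty$ when $L=0$.

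First I would apply the local cohomology functors to the sequence $0\To M\To N\To P\To 0$, obtaining in each internal degree $j$ the long exact sequence
\[
\cdots\To \Coh{i-1}{P}_j\To \Coh{i}{M}_j\To \Coh{i}{N}_j\To\cdots .
\]
Then the bound can be read off degreewise. Suppose $\Coh{i}{M}_j\neq 0$ and pick a nonzero element; by exactness at $\Coh{i}{M}_j$, the kernel of the map to $\Coh{i}{N}_j$ equals the image of the map from $\Coh{i-1}{P}_j$, so this element either maps to a nonzero element of $\Coh{i}{N}_j$ or lies in the image of $\Coh{i-1}{P}_j$. In the first case $\Coh{i}{N}_j\neq 0$, whence $i+j\leq \reg N$; in the second case $\Coh{i-1}{P}_j\neq 0$, whence $(i-1)+j\leq \reg P$, that is, $i+j\leq \reg P+1$. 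Hence every contributing pair $(i,j)$ satisfies $i+j\leq \max\{\reg N,\,\reg P+1\}$, and taking the maximum over all such pairs yields $\reg M\leq \max\{\reg N,\reg P+1\}$.

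An equally short route uses the graded Betti number description $\reg L=\max\{\,j-i:\Tor^S_i(L,\KK)_j\neq 0\,\}$ together with the long exact sequence of $\Tor(-,\KK)$; the case analysis is identical, the term coming from $P$ now sitting in homological degree $i+1$, which is exactly what produces the shift by $1$. In either formulation I do not expect a genuine obstacle: the whole content is the exactness of the long exact sequence of derived functors, which is standard homological algebra, together with its compatibility with the internal grading. The only point requiring a little care is the degenerate bookkeeping, namely the conventions for the zero module and for the boundary term $\Coh{-1}{P}=0$, so that the two-case argument remains valid at the extreme cohomological indices.
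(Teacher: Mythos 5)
Your proof is correct. Note, though, that the paper itself gives no argument for this lemma: it is quoted verbatim as \cite[Corollary~18.7]{P}, so there is no in-paper proof to match; your proposal supplies a self-contained verification of the cited fact. Both of your routes work. The degreewise case analysis on the long exact sequence
\[
\Coh{i-1}{P}_j\To \Coh{i}{M}_j\To \Coh{i}{N}_j
\]
is sound, the conventions $\reg 0=-\infty$ and $\Coh{-1}{P}=0$ handle the boundary cases exactly as you say (at $i=0$ the kernel of $\Coh{0}{M}_j\to\Coh{0}{N}_j$ is zero, so only the first case occurs), and the shift by $1$ falls out correctly in either formulation. For the record, your second route is the one closest to the cited source: Peeva defines regularity through graded Betti numbers, so Corollary~18.7 there is obtained from the long exact sequence of $\Tor(-,\KK)$, where the term coming from $P$ sits in homological degree $i+1$, precisely as you describe. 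The local cohomology route buys nothing extra here but is equally standard; its only prerequisite is the equivalence of the two descriptions of regularity (Eisenbud--Goto), which is a deeper fact than the Tor-side bookkeeping, so if one wants the argument to be genuinely elementary relative to the definitions in \cite{P}, the Tor version is the preferable one.
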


\medskip
Now we are ready to prove the main theorem of this section.

\begin{Theorem}\label{reg-join}
	Let $G_1$ and $G_2$ be two graphs on disjoint sets of vertices, with $n_1$ and $n_2$ vertices, respectively and let $G=G_1*G_2$. If $m<n_1+n_2$, then
	\[\max\{m-1,\reg(S/J_{K_m,G_i})|i\in[2]\}\leq \reg(S/J_{K_m,G})\leq \max\{m,\reg(S/J_{K_m,G_i})|i\in [2]\}.\]
	In other words,
	\[
	\reg(S/J_{K_m,G})\in \{\reg(S/J_{K_m,G_1}), \reg(S/J_{K_m,G_2}), m-1, m\}.
	\]
\end{Theorem}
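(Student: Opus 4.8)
The plan is to prove the two displayed inequalities separately; the final ``in other words'' statement is then immediate. Indeed, writing $r_i=\reg(S/J_{K_m,G_i})$, the interval $[\max\{m-1,r_1,r_2\},\ \max\{m,r_1,r_2\}]$ collapses to the single value $\max\{r_1,r_2\}$ when $\max\{r_1,r_2\}\ge m$ (so $\reg\in\{r_1,r_2\}$), and is contained in $\{m-1,m\}$ otherwise. Hence once the bounds are established, $\reg(S/J_{K_m,G})$ necessarily lies in $\{r_1,r_2,m-1,m\}$.

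For the lower bound I would argue entirely through induced subgraphs and Proposition~\ref{induced}. Since $G_1$ and $G_2$ are induced subgraphs of $G=G_1*G_2$, we immediately get $\reg(S/J_{K_m,G_i})\le \reg(S/J_{K_m,G})$ for $i\in[2]$. To obtain $m-1\le \reg(S/J_{K_m,G})$ I would exhibit a \emph{connected} induced subgraph $H$ of $G$ on exactly $m$ vertices. Because $2\le m<n_1+n_2$ and $V_1,V_2\neq\emptyset$, one can choose nonempty $A\subseteq V_1$ and $B\subseteq V_2$ with $|A|+|B|=m$; then $H=G[A\cup B]=G_1[A]*G_2[B]$ is a join of two nonempty graphs, hence connected, on $m$ vertices. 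Applying Theorem~\ref{Matsuda-Murai} to $H$ (its vertex count being $m$, so the hypothesis $m\ge |V(H)|$ holds with equality) gives $\reg(S/J_{K_m,H})=m-1$, and Proposition~\ref{induced} then yields $\reg(S/J_{K_m,G})\ge m-1$. This finishes the lower bound.

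The upper bound is the substantial part, and I would prove it by induction on the number of vertices via the short exact sequence machinery of Lemma~\ref{exact}. Fix a vertex $v$ of one factor, say $v\in V_2$, and observe that deletion preserves the join structure: $G\setminus v=G_1*(G_2\setminus v)$. Killing the column $x_{1v},\dots,x_{mv}$ gives $S/(J_{K_m,G}+(x_{\bullet v}))\cong \bar S/J_{K_m,G\setminus v}$, whose regularity equals $\reg(S/J_{K_m,G\setminus v})$ and is controlled by the inductive hypothesis (or directly when $G_2\setminus v$ becomes empty). The crux is the colon ideal $J_{K_m,G}:f$ for a suitable element $f$ of the killed column: the goal is to identify it as the generalized binomial edge ideal of the graph obtained from $G$ by completing the neighborhood of $v$. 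Since $v$ is joined to all of $V_1$, this neighborhood is nearly all of $G$, so its regularity should be bounded by $m-1$ via Proposition~\ref{both complete} (here $m<n$ forces $\min\{m-1,n-1\}=m-1$) or by an inductive value. Placing $S/J_{K_m,G}$ in the middle of $0\to S/(J:f)(-d)\to S/J\to S/(J+(f))\to 0$ and feeding these two regularities into the standard short exact sequence inequalities (of which Lemma~\ref{exact} is one) produces $\reg(S/J_{K_m,G})\le \max\{m,\ \reg(S/J_{K_m,G_i})\mid i\in[2]\}$, the appearance of $m$ rather than $m-1$ being accounted for by the $+1$ shift attached to the colon term.

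The main obstacle I anticipate is exactly the explicit identification and regularity estimate of this colon ideal, together with the bookkeeping of the cases arising according to whether $G_1$ and $G_2$ are connected or disconnected. When both factors are disconnected the minimal primary decomposition is transparent through Proposition~\ref{both disconnected 1}, so one can hope to run an exact-sequence (Mayer--Vietoris type) argument directly on $J_{K_m,G}=\bigcap_{T\in\calC(G)}P_T(K_m,G)$; the connected cases would then be reduced to this situation, or handled by the same vertex-deletion induction, with the base case supplied by the complete-graph computation $\reg(S/J_{K_m,K_n})=m-1$ of Proposition~\ref{both complete}. Keeping the inductive hypothesis uniform across these cases, and guaranteeing that the colon term never contributes more than $m$, is where the real care is required.
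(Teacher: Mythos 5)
Your lower bound is correct, and in fact more self-contained than the paper's own treatment: where the paper simply cites \cite[Corollary~3.8]{Ku1} for $\reg(S/J_{K_m,G})\geq m-1$, you derive it from results already stated in the paper, by choosing nonempty $A\subseteq V_1$, $B\subseteq V_2$ with $|A|+|B|=m$ (possible since $2\leq m<n_1+n_2$), noting that $H=G_1[A]*G_2[B]$ is a connected induced subgraph of $G$ on $m$ vertices, and applying Theorem~\ref{Matsuda-Murai} together with Proposition~\ref{induced}. The reduction of the ``in other words'' assertion to the two displayed bounds is also fine.

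The upper bound, however, is where your proposal has a genuine gap: it is a plan rather than a proof, and the plan as sketched would not go through with the tools available. The pivotal step---identifying $J_{K_m,G}:f$ with the generalized binomial edge ideal of the graph obtained from $G$ by completing the neighborhood of $v$---is announced as a ``goal'' and never established; for $m\geq 3$ no Ohtani-type colon lemma of this shape is stated in the paper, and your own text flags it as the main obstacle. There is also a structural mismatch in the bookkeeping: to invoke the inductive hypothesis you kill the whole column $(x_{1v},\dots,x_{mv})$, so that $S/(J_{K_m,G}+(x_{1v},\dots,x_{mv}))$ has the regularity of $S/J_{K_m,G\setminus v}$, but the exact sequence $0\to S/(J:f)(-d)\to S/J\to S/(J+(f))\to 0$ involves a \emph{single} element $f$; killing the column one variable at a time produces intermediate ideals $J_{K_m,G}+(x_{1v},\dots,x_{kv})$ with $0<k<m$ that are not generalized binomial edge ideals, so neither the induction nor the hoped-for colon identification applies to them. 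The paper's actual argument avoids colon ideals entirely: when both $G_1$ and $G_2$ are disconnected, Proposition~\ref{both disconnected 1} gives the splitting $J_{K_m,G}=Q\cap Q'$ with $Q=(x_{ij}:(i,j)\in[m]\times V_1)+J_{K_m,G_2}$, $Q'=J_{K_m,K_n}\cap\big((x_{ij}:(i,j)\in[m]\times V_2)+J_{K_m,G_1}\big)$ and $Q+Q'=(x_{ij}:(i,j)\in[m]\times V_1)+J_{K_m,K_{n_2}}$, after which Lemma~\ref{exact} is applied to two Mayer--Vietoris-type sequences, with Proposition~\ref{both complete} handling the complete-graph terms; the case where a factor is connected is then reduced to this one by adding an isolated vertex to each factor, since $G$ is an induced subgraph of $G_1'*G_2'$ and the generating sets of $J_{K_m,G_i}$ and $J_{K_m,G_i'}$ coincide. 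This is exactly the route you mention only in passing at the end (``one can hope to run a Mayer--Vietoris argument''); it is the part that needs to be executed in detail, and your colon-ideal alternative would require new lemmas not present in the paper.
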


\begin{proof} If $G$ is a complete graph, then both $G_1$ and $G_2$ are also complete graphs.  In this case, both of the lower and upper bounds follow from Proposition~\ref{both complete}.
So we can assume that $G$ is not complete, i.e., at least one of $G_1$ and $G_2$ is not complete.

We can assume that  $n_1\leq n_2$ and $n=n_1+n_2$.  Since both $G_1$ and $G_2$ are the induced subgraphs of $G$, by Proposition~\ref{induced}, we have $\reg(S/J_{K_m,G})\geq \reg(S/J_{K_m,G_i})$ for each $i\in [2]$. It follows that
\begin{equation}\label{lower bound}
\reg(S/J_{K_m,G})\geq \max\{\reg(S/J_{K_m,G_1}),\reg(S/J_{K_m,G_2}),m-1\}
\end{equation}
by \cite[Corollary~3.8]{Ku1}.

To check another inequality,  we first consider the case where both of $G_1$ and $G_2$ are disconnected graphs.  Let $G_1=\bigsqcup_{i=1}^r G_{1i}$ and $G_2=\bigsqcup_{i=1}^s G_{2i}$ with disjoint sets of vertices
	$V_1=\bigcup_{i=1}^r V_{1i}$ and $V_2=\bigcup_{i=1}^s V_{2i}$,
	respectively, where $r,s\geq 2$. Applying Proposition~\ref{both disconnected 1}, we can decompose $\calJ_{K_m,G}$ as
            $\calJ_{K_m,G}=Q\cap Q'$, where
	\begin{equation}
	Q=\bigcap_{\substack{
			T\in \mathcal{C}(G) \\
			V_1\subseteq T
	}}
	P_T(K_m,G)~~,~~Q'=\bigcap_{\substack{
			T\in \mathcal{C}(G) \\
			V_1\nsubseteq T
	}}
	P_T(K_m,G).
	\nonumber
	\end{equation}
	Furthermore, it follows from Proposition \ref{both disconnected 1} that
	\begin{align*}
	Q&=(x_{ij}:(i,j)\in [m]\times V_1)+\bigcap_{\substack{
			T\in \mathcal{C}(G) \\
			V_1\subseteq T
	}}
	P_{T\setminus V_1}(K_m,G_2)\\&=(x_{ij}:(i,j)\in [m]\times V_1)+J_{K_m,G_2},
	\nonumber
 \end{align*}
	and
	\begin{align}
	Q'&=P_{\emptyset}(K_m,G)\cap\big{(}\bigcap_{\substack{
			\emptyset\neq T\in \mathcal{C}(G) \\
			V_1\nsubseteq T
	}}
	P_T(K_m,G)\big{)}\\
    &= P_{\emptyset}(K_m,G)\cap \big{(}(x_{ij}:(i,j)\in [m]\times V_2)+\bigcap_{\substack{
			T\in \mathcal{C}(G) \\
			V_2\subseteq T
	}}
	P_{T\setminus V_2}(K_m,G_1)\big{)}\nonumber\\
   &=J_{K_m,K_n}\cap \big{(}(x_{ij}:(i,j)\in [m]\times V_2)+J_{K_m,G_1}\big{)}.
	\nonumber
	\end{align}
Hence
	\[
	Q+Q'=(x_{ij}:(i,j)\in [m]\times V_1)+J_{K_m,K_{n_2}}.
	\]
	Indeed, it is obvious that $(x_{ij}:(i,j)\in [m]\times V_1)+J_{K_m,K_{n_2}}\subseteq Q+Q'$, and the other inclusion
follows, since $Q$ and $J_{K_m,K_n}$ are contained in $(x_{ij}: i\in [m], j\in V_1)+J_{K_m,K_{n_2}}$.
 Consequently, we have
	\begin{align*}
	\reg(S/Q)&=\reg(S/J_{K_m,G_2}),\\
\reg(S/(Q+Q'))&=\reg(S/J_{K_m,K_{n_2}})=\min\{m-1,n_2-1\}
	\end{align*}
 by Proposition~\ref{both complete}. Applying Lemma  \ref{exact} to the following short exact sequence
	\[
	0\rightarrow S/J_{K_m,G}\rightarrow S/Q\oplus S/Q'\rightarrow S/(Q+Q')\rightarrow 0,
	\]
	we obtain
	\[
	\reg(S/J_{K_m,G})\leq \max\{\reg(S/Q),\reg(S/Q'),\reg(S/(Q+Q'))+1\}.
	\]
Hence
	\begin{equation}\label{lower bound equality 1}
	\reg(S/J_{K_m,G})\leq \max\{\reg(S/J_{K_m,G_2}),\reg(S/Q'),
	\min\{m,n_2\}\}.
	\end{equation}
	To compute $\reg(S/Q')$, we consider the exact sequence
	\[
	0\rightarrow S/Q'\rightarrow S/J_{K_m,K_n}\oplus S/(I+J_{K_m,G_1})\rightarrow S/(I+J_{K_m,K_{n_1}})\rightarrow 0,
	\]
	where $I=(x_{ij}:(i,j)\in [m]\times V_2)$.  Applying Lemma  \ref{exact} again, we get
	\[
	\reg(S/Q')\leq
	\max\{\reg(S/J_{K_m,K_n}),\reg(S/J_{K_m,G_1}),
	\reg(S/J_{K_m,K_{n_1}})+1\}.
	\]
Note that $m<n$, by Proposition~\ref{both complete}, we have
	\[
	\reg(S/Q')\leq\max\{m-1,\reg(S/J_{K_m,G_1}),\min\{m,n_1\}\}.
	\]
We distinguish the following cases:\\

(I) If $m\leq n_1$, then
	\[
	\reg(S/Q')\leq\max\{\reg(S/J_{K_m,G_1}), m\}.
	\]

(II)  If $m>n_1$, then
	\begin{equation} \label{lower bound equality 2}
	\reg(S/Q')\leq\max\{\reg(S/J_{K_m,G_1}), m-1\}.
	\end{equation}
By the above two cases and (\ref{lower bound equality 1}), we get
	\[
	\reg(S/J_{K_m,G})\leq \mathrm{max}\{\mathrm{reg}(S/J_{K_m,G_1}),\mathrm{reg}(S/J_{K_m,G_2}),m\}.
	\]
In the following, we can assume that either $G_1$ or $G_2$ is connected.  We can obtain two disconnected graphs $G_1'$ and $G_2'$
by adding isolated vertices $v$ and $w$ to $G_1$ and $G_2$, respectively. Let $G'=G_1'*G_2'$ and let $S'$ be the polynomial ring over the field $K$ obtained by adding $2m$ variables, corresponding to the vertices $v$ and $w$, to the polynomial ring $S$. Then $G$ is an induced subgraph of $G'$ and it is clear that the homogeneous binomial generating sets of $J_{K_m,G_i}$ and $J_{K_m,G_i'}$ coincide for each $i=1,2$.

Note that $m<n_1+n_2+2$, it follows from  Proposition~\ref{induced} and the above discussion that
\begin{align*}
	\reg(S/J_{K_m,G})&\le \reg(S'/J_{K_m,G'})\leq \max\{\reg(S'/J_{K_m,G_1'}),\reg(S'/J_{K_m,G_2'}),m\}\\
&=\max\{\reg(S/J_{K_m,G_1}),\reg(S/J_{K_m,G_2}),m\}.
	\end{align*}
	This completes the proof.
\end{proof}

\medskip
Next we discuss certain conditions  for attaining one of the bounds given in Theorem~\ref{reg-join}.  The next corollary  concerns the lower bound.
\begin{Corollary}\label{equal to m-1}
	Let $G_1$ and $G_2$ be two graphs on disjoint sets of vertices, with $n_1$ and $n_2$ vertices, respectively, where $n_1\leq n_2$. Let $G=G_1*G_2$ and  $3\leq m<n_1+n_2$. If  one of the following conditions holds:
	\begin{enumerate}
	\item $G_1$ and $G_2$ are disconnected and $n_2 < m$;
	\item $n_2+1 < m$.
	\end{enumerate}
		Then
		\[
		\reg(S/J_{K_m,G})=m-1.
		\]
	\end{Corollary}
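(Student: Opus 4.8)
The plan is to combine the two-sided estimate of Theorem~\ref{reg-join} with the sharper intermediate bounds obtained inside its proof. The lower bound is immediate: in both cases the hypothesis $m<n_1+n_2$ lets Theorem~\ref{reg-join} give $\reg(S/J_{K_m,G})\ge m-1$, so the whole content is the reverse inequality $\reg(S/J_{K_m,G})\le m-1$. I would prove case (1) directly from the disconnected analysis in the proof of Theorem~\ref{reg-join}, and then deduce case (2) from case (1) by the isolated-vertex trick already used there.

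For case (1), both $G_1$ and $G_2$ are disconnected, so I can reuse the decomposition $J_{K_m,G}=Q\cap Q'$ and the bound (\ref{lower bound equality 1}),
\[
\reg(S/J_{K_m,G})\le \max\{\reg(S/J_{K_m,G_2}),\ \reg(S/Q'),\ \min\{m,n_2\}\}.
\]
Under $n_2<m$ each of the three terms is at most $m-1$. Indeed $\min\{m,n_2\}=n_2\le m-1$. Next, $n_2<m$ forces $n_1\le n_2<m$, so $m>n_1$ and we are in case (II) of that proof, giving $\reg(S/Q')\le\max\{\reg(S/J_{K_m,G_1}),m-1\}$. Finally, since $m>n_i$ and $G_i$ is disconnected, the regularity formula for disconnected graphs (the remark following Theorem~\ref{Matsuda-Murai}) yields $\reg(S/J_{K_m,G_i})=n_i-c(G_i)\le n_i-2\le n_2-2<m-1$ for $i\in[2]$. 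Hence $\reg(S/Q')\le m-1$ and all three terms are $\le m-1$, so $\reg(S/J_{K_m,G})\le m-1$, which together with the lower bound gives equality.

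For case (2), I would add one isolated vertex $v$ to $G_1$ and one isolated vertex $w$ to $G_2$, obtaining disconnected graphs $G_1'$ and $G_2'$ on $n_1+1$ and $n_2+1$ vertices, and set $G'=G_1'*G_2'$ inside the enlarged ring $S'$, exactly as in the proof of Theorem~\ref{reg-join}. Since $G$ is an induced subgraph of $G'$ and the binomial generators of $J_{K_m,G_i}$ and $J_{K_m,G_i'}$ agree, Proposition~\ref{induced} gives $\reg(S/J_{K_m,G})\le\reg(S'/J_{K_m,G'})$. Now $G_1',G_2'$ are disconnected, $n_1+1\le n_2+1$, and $m<n_1+n_2<(n_1+1)+(n_2+1)$, while the hypothesis $n_2+1<m$ is precisely the inequality ``$n_2'<m$'' required to apply case (1) to $G'$; hence $\reg(S'/J_{K_m,G'})=m-1$, and combined with the lower bound $\reg(S/J_{K_m,G})\ge m-1$ we again get equality.

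The routine part is the bookkeeping in case (1); the one point that must be watched is that the hypothesis $n_2<m$ does double duty — it both collapses the complete-graph contribution $\min\{m,n_2\}$ to $n_2\le m-1$ and pushes the estimate for $\reg(S/Q')$ into the favorable case (II) with $m>n_1$ — and that the disconnectedness of each $G_i$ makes its own regularity strictly smaller than $m-1$ via the $n_i-c(G_i)$ formula. The only genuinely structural step is recognizing that case (2), which carries no disconnectedness assumption, must first be converted to case (1) by padding with isolated vertices, after which the slack $n_2+1<m$ is exactly what the reduction consumes.
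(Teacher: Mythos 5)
Your proof is correct and takes essentially the same route as the paper: case (1) is exactly the paper's argument, combining Eqs.~(\ref{lower bound equality 1}) and (\ref{lower bound equality 2}) from the proof of Theorem~\ref{reg-join} with the bound $\reg(S/J_{K_m,G_i})\leq n_i-1$ and the lower bound $m-1$, while case (2) is the paper's isolated-vertex replacement made explicit as a reduction to case (1). The only cosmetic difference is that you pad both graphs unconditionally, whereas the paper pads only the connected ones; this changes nothing.
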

	
	\begin{proof}
		We use the notation from Theorem~\ref{reg-join}. If $G_1$ and $G_2$ are disconnected and $n_2<m$, then by Eqs. (\ref{lower bound equality 1}) and (\ref{lower bound equality 2}) in the proof of Theorem~\ref{reg-join}, we get
\[
\reg(S/J_{K_m,G})\leq \max\{\reg(S/J_{K_m,G_1}),\reg(S/J_{K_m,G_2}),m-1\}.
		\]
		On the other hand, by Theorem~\ref{Matsuda-Murai} we have $\mathrm{reg}(S/J_{K_m,G_i})\leq n_i-1$ for each $i=1,2$.
		Hence
		\[
		\reg(S/J_{K_m,G})\leq m-1.
		\]
		It follows that from  the lower bound given in Theorem~\ref{reg-join} that
		\[
		\reg(S/J_{K_m,G})= m-1.
		\]
		
		If $n_2+1<m$, then the desired result can be obtained by similar arguments as in above. In this case, if $G_1$ or $G_2$ is connected, then it is enough to replace $G_1$ or $G_2$ by $G'_1$ or $G'_2$, as well as $n_1$ or $n_2$ by $n_1+1$ or $n_2+1$, respectively.
	\end{proof}


\begin{Remark}\label{lower and upper bound}
 \begin{enumerate}
{\em \item {\bf (Lower Bound)} Note that Corollary~\ref{equal to m-1} gives an exact formula for the regularity of the generalized binomial edge ideals for many non-trivial and unknown cases. More explicitly, let $G_1$ and $G_2$ be any arbitrary graphs with  $t$ and $t+j$ vertices, respectively,  where $t\ge 1$, $j\geq 0$ and $2t+j\geq 3$. Then for each $m=t+j+2, t+j+3, \ldots, 2t+j-1$, we have $\mathrm{reg}(S/J_{K_m,G_1*G_2})=m-1$, while $\mathrm{reg}(S/J_{K_m,G_1})\leq t-1$ and $\mathrm{reg}(S/J_{K_m,G_2})\leq t+j-1$ by Theorem~\ref{Matsuda-Murai}, and hence the maximum is given by $m-1$.

\item {\bf (Upper Bound)} If $G=K_{1,m}$ is a star graph with $m+1$ vertices where $m\geq 2$, then $\mathrm{reg}(S/J_{K_m,G})=m$ by \cite[Lemma~3.8]{Zhu3}. While $K_{1,m}$ is the join product of $G_1=K_1$ and $G_2=\bigsqcup_{i=1}^m K_1$, so $J_{K_m,G_i}=0$ for each $i=1,2$, and hence it is clear that  $\reg(S/J_{K_m,G_i})=0$ for any $i=1,2$. This shows  that the upper bound  in Theorem~\ref{reg-join} can be achieved in certain cases, and in particular that the maximum is given by $m$.
     }
	\end{enumerate}
\end{Remark}

The following consequence shows that the lower and upper bounds in Theorem 2.6 can also coincide, and this gives a precise formula for the regularity of the generalized binomial edge ideals of certain graphs.
\begin{Corollary}\label{equality of lower and upper bounds}
Let $G_1$ and $G_2$ be two graphs on disjoint sets of vertices, with $n_1$ and $n_2$ vertices, respectively. Let $G=G_1*G_2$ and  $m<n_1+n_2$. If $\mathrm{reg}(S/J_{K_m,G_i})\geq m$ for some $i=1,2$, then
	\[
      \reg(S/J_{K_m,G})=\max\{\reg(S/J_{K_m,G_1}), \reg(S/J_{K_m,G_2})\}.
	\]
\end{Corollary}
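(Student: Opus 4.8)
The plan is to derive Corollary~\ref{equality of lower and upper bounds} directly from the two-sided estimate in Theorem~\ref{reg-join}. First I would invoke the theorem: since $m<n_1+n_2$, we have
\[
\max\{m-1,\reg(S/J_{K_m,G_i})\mid i\in[2]\}\leq \reg(S/J_{K_m,G})\leq \max\{m,\reg(S/J_{K_m,G_i})\mid i\in[2]\}.
\]
The whole point of the extra hypothesis $\reg(S/J_{K_m,G_i})\geq m$ for some $i$ is to force the two $\max$'s appearing on the left and on the right to collapse to the same quantity, namely $\max\{\reg(S/J_{K_m,G_1}),\reg(S/J_{K_m,G_2})\}$.

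Concretely, set $R=\max\{\reg(S/J_{K_m,G_1}),\reg(S/J_{K_m,G_2})\}$ and suppose without loss of generality that $\reg(S/J_{K_m,G_1})\geq m$, so that $R\geq m>m-1$. Then on the lower-bound side, $m-1<m\le R$, so $\max\{m-1,\reg(S/J_{K_m,G_i})\mid i\in[2]\}=R$. On the upper-bound side, similarly $m\le R$, so the term $m$ is absorbed and $\max\{m,\reg(S/J_{K_m,G_i})\mid i\in[2]\}=R$ as well. Thus both bounds equal $R$, and squeezing gives $\reg(S/J_{K_m,G})=R$, which is exactly the claimed formula.

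There is essentially no hard step here; the argument is a short observation that the hypothesis $\reg(S/J_{K_m,G_i})\ge m$ makes the values $m-1$ and $m$ irrelevant in the respective maxima. The one point requiring a word of care is that I should make sure the graph $G=G_1*G_2$ is genuinely covered by Theorem~\ref{reg-join}, i.e.\ that the hypothesis $m<n_1+n_2$ is in force (it is, by assumption) and that no degenerate configuration is excluded; since $\reg(S/J_{K_m,G_1})\geq m\geq 3>1$, neither $G_1$ nor $G_2$ can be the trivial graph $K_2$ alone, so the bounds genuinely apply. I would state the proof in a single short paragraph after fixing the $\leq$/$\geq$ notation from the theorem, with no case analysis beyond recording which of $G_1,G_2$ attains the regularity at least $m$.
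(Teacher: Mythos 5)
Your proposal is correct and is exactly the argument the paper intends: the corollary is stated as an immediate consequence of Theorem~\ref{reg-join}, since the hypothesis $\reg(S/J_{K_m,G_i})\geq m$ makes both the lower bound $\max\{m-1,\reg(S/J_{K_m,G_i})\mid i\in[2]\}$ and the upper bound $\max\{m,\reg(S/J_{K_m,G_i})\mid i\in[2]\}$ collapse to $\max\{\reg(S/J_{K_m,G_1}),\reg(S/J_{K_m,G_2})\}$, and squeezing finishes the proof. The paper offers no written proof beyond this, so your spelled-out version matches its approach; your closing remark about excluding degenerate cases is unnecessary (Theorem~\ref{reg-join} needs only $m<n_1+n_2$) but harmless.
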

By the assumption and by Theorem~\ref{Matsuda-Murai}, the graph $G_i$, whose generalized binomial edge ideal has regularity at least $m$, consists of at least $m+1$ vertices,  but no restriction is needed for the other graph  involved in the join product. As an example of Corollary~\ref{equality of lower and upper bounds}, we can choose $G_1=K_{1,m}$ with $m\geq 2$, and $G_2$ is any graph.  The other candidate for $G_1$, instead of $K_{1,m}$, is given in the next example when $m=3$ which will also be useful for our characterization in the next section.

\begin{Example}\label{triangle-whisker}
Let $G_1=K_1$, $G_2=K_1\sqcup K_2$ and $G=G_1*G_2$. Then $\reg(S/J_{K_3,G})=3$, see also \cite[page~364]{Ku1}. As we will use this graph later, we will denote it by $B$.
\end{Example}

We end this section by giving a positive answer to Question~\ref{any regularity} for $3\leq r\leq n-2$, using our construction in this section  on the join product of graphs and the regularity results.

\begin{Corollary}\label{answer question 1}
Let   $n$ and $r$ be  two integers such that $3\leq r \leq n-2$. Then there exists a connected graph $G$ with $n$ vertices and $\reg(S/J_{K_m,G})=r$ for some $m\geq 3$.
\end{Corollary}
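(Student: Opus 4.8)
The goal is to produce, for every pair $(n,r)$ with $3\leq r\leq n-2$, a connected graph $G$ on $n$ vertices together with some $m\geq 3$ such that $\reg(S/J_{K_m,G})=r$. The plan is to realize $G$ as a join product $G_1*G_2$ and to exploit the bounds of Theorem~\ref{reg-join} together with the exact formulas of Corollary~\ref{equal to m-1} and Remark~\ref{lower and upper bound}. Since the join product of two nonempty graphs is automatically connected (every vertex of $V_1$ is adjacent to every vertex of $V_2$), the connectivity requirement is free, and the work reduces to matching the regularity value to $r$.

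The cleanest route is through the Lower Bound case of Corollary~\ref{equal to m-1}, where $\reg(S/J_{K_m,G})=m-1$. I would set $m=r+1$, so that the target regularity $m-1$ equals $r$. It then remains to choose $G_1$ and $G_2$ with $n_1+n_2=n$ arranged so that the hypotheses of Corollary~\ref{equal to m-1}(1) or (2) are satisfied, namely $3\leq m<n_1+n_2=n$ together with either ($G_1,G_2$ disconnected and $n_2<m$) or ($n_2+1<m$). With $m=r+1$ the inequality $m<n$ reads $r+1<n$, i.e. $r\leq n-2$, which is exactly our hypothesis; and $m\geq 3$ forces $r\geq 2$, consistent with $r\geq 3$. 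So the only real constraint is to partition the $n$ vertices into blocks $V_1,V_2$ (with $n_1\leq n_2$) whose sizes obey the relevant inequality. Concretely, following the recipe spelled out in Remark~\ref{lower and upper bound}, given graphs $G_1,G_2$ with $t$ and $t+j$ vertices one gets $\reg(S/J_{K_m,G_1*G_2})=m-1$ for every $m$ in the range $t+j+2,\dots,2t+j-1$; choosing the block sizes so that $r+1$ lands in this window, and taking $G_1,G_2$ to be, say, empty graphs (or any fixed graphs) on those vertex sets, yields the desired example.

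In carrying this out I would split into the two cases of Corollary~\ref{equal to m-1}. For the generic range I would take $G_1$ and $G_2$ disconnected (e.g. each a disjoint union of isolated vertices, so that $r,s\geq 2$ and $J_{K_m,G_i}=0$) and verify $n_2<m=r+1$, which amounts to bounding $n_2$; since $n=n_1+n_2$ and we may make $n_2$ as small as $\lceil n/2\rceil$ or as large as $n-2$, a suitable choice of split makes $n_2\leq r$ precisely when $n$ is not too large relative to $r$. For the remaining regime, where forcing $n_2<m$ is incompatible with $n_1\leq n_2$ and $n_1+n_2=n$, I would instead invoke condition (2), $n_2+1<m$, which allows one of the factors to be connected; here the freedom to add an isolated vertex (as in the proof of Corollary~\ref{equal to m-1}) keeps the argument uniform. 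A clean way to avoid case analysis is simply to cite Remark~\ref{lower and upper bound}: set $t=n-r$ and $j=2r-n$ (so $t\geq 2$ and $t+j=r\geq 3$, using $r\leq n-2$ and $r\geq 3$), giving blocks of sizes $t$ and $t+j$ summing to $n$, and observe that $m=r+1=t+j+1$ lies in the asserted window $\{t+j+2,\dots,2t+j-1\}$ after the isolated-vertex adjustment, so $\reg=m-1=r$.

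The main obstacle, such as it is, is purely bookkeeping: ensuring that for \emph{every} admissible $(n,r)$ one of the two hypotheses of Corollary~\ref{equal to m-1} can be met by a legal vertex partition with $n_1\leq n_2$ and $n_1,n_2\geq 1$, and that the edge cases (e.g. $n_1=1$, forcing $G_1=K_1$, which meshes with Corollary~\ref{equality of lower and upper bounds} and Example~\ref{triangle-whisker}) are not overlooked. I expect no deep difficulty here, since the interval of valid $m$ produced by Remark~\ref{lower and upper bound} has length $t-2$ and width grows with the smaller block, so a balanced or near-balanced split always captures $m=r+1$; the only point requiring care is confirming that the window is nonempty and contains $r+1$ for the extreme values $r=3$ and $r=n-2$, which I would check directly.
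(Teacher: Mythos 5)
There is a genuine gap: your strategy of fixing $m=r+1$ and forcing $\reg(S/J_{K_m,G})=m-1$ via Corollary~\ref{equal to m-1} cannot cover small values of $r$ relative to $n$. In any join decomposition $G=G_1*G_2$ of an $n$-vertex graph with $n_1\leq n_2$, the larger factor has $n_2\geq \lceil n/2\rceil$ vertices, and both hypotheses of Corollary~\ref{equal to m-1} require $m$ to beat this size: condition (1) needs $n_2<m$, and condition (2) needs $n_2+1<m$, which is \emph{more} restrictive in terms of size, not less --- so invoking (2) "for the remaining regime where $n_2<m$ fails" is backwards; (2) only trades the disconnectedness requirement for a stricter size bound. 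With $m=r+1$ these hypotheses force $r+1>n_2\geq\lceil n/2\rceil$, so for instance $n=20$, $r=3$ admits no legal split whatsoever. Your claim that "a balanced or near-balanced split always captures $m=r+1$" is therefore false: a balanced split produces the window $\{t+j+2,\ldots,2t+j-1\}$ whose minimum is roughly $n/2+2$, far above $r+1$ when $r$ is small. Likewise your "clean" substitution $t=n-r$, $j=2r-n$ silently requires $j\geq 0$, i.e.\ $r\geq n/2$, and even then $m=r+1=t+j+1$ lies \emph{outside} the window of Remark~\ref{lower and upper bound}(1); it is admissible only under condition (1) of Corollary~\ref{equal to m-1}, with both factors disconnected. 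In short, your method proves the statement only in the regime $r\geq\lceil n/2\rceil$.

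The paper's proof sidesteps this obstruction by letting the \emph{factor's} regularity, rather than $m-1$, carry the value $r$: it fixes $m$ with $3\leq m\leq r$ (e.g.\ $m=3$), takes $G_1=P_{r+1}$, for which $\reg(S/J_{K_m,P_{r+1}})=r$ by \cite[Corollary~3.6]{IrCh}, and $G_2=K_{n-r-1}^c$, for which the ideal is zero. Since $m\leq r$, the lower bound $\max\{m-1,r,0\}$ and the upper bound $\max\{m,r,0\}$ of Theorem~\ref{reg-join} both equal $r$ (this is precisely the situation of Corollary~\ref{equality of lower and upper bounds}), so $\reg(S/J_{K_m,G_1*G_2})=r$ for \emph{every} pair $3\leq r\leq n-2$, with no constraint tying $r$ to $n/2$. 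To repair your argument you would need to adopt this idea (or another supply of graphs of prescribed regularity $r$ realized with some $m\leq r$) for the range $3\leq r<\lceil n/2\rceil$; the $\reg=m-1$ mechanism alone cannot reach it.
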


\begin{proof} Let  $3\leq m\leq r$, $n=n_1+n_2$, and let $G_1$ and $G_2$ be  two graphs with $n_1$ and $n_2$ vertices, respectively,  such that $\reg(S/J_{K_m,G_1})=r$ and $\reg(S/J_{K_m,G_2})\leq r$. Also, let $G=G_1*G_2$. It follows from Theorem~\ref{reg-join} that $\reg(S/J_{K_m,G})=r$. In particular, we can choose $G_1=P_{r+1}$ and $G_2=K_{n-r-1}^c$, where $K_{n-r-1}^c$ is the complement of the complete graph $K_{n-r-1}$. Then $\reg(S/J_{K_m,K_{n-r-1}^c})=0$ and $\reg(S/J_{K_m,P_{r+1}})=r$ by \cite[Corollary~3.6]{IrCh}.
\end{proof}

\section{$P_4$-free graphs}\label{P4 free section}

As usual, we denote the path graph with $k$ vertices by $P_k$. In this section, we provide a graph theoretical characterization of $P_4$-free graphs which will be useful in answering Questions~\ref{any regularity} and \ref{regularity characterization} for $r=2$ in the next section. This class of graphs has been of great interest in graph theory and there are several interesting characterizations for such graphs in the literature. Therefore, they are known by some other names, such as cographs or a comparability graph of a series-parallel partial order. Nevertheless, to the best of our knowledge, the characterization given in this section is a new one in the literature.

\medskip
First, we will review some graphical notions that we will need for our purposes in this section. Let $G$ and $H$ be two graphs. Then $G$ is said to be an $H$-free graph if it has no induced subgraph isomorphic to $H$. Namely, $P_4$-free graphs have no induced subgraphs isomorphic to $P_4$.

Let $v$ be a vertex of $G$. The set of all neighbors of $v$ in $G$ is denoted by $N_G(v)$. Moreover, we set $N_G[v]=N_G(v)\cup \{v\}$. More generally, if $T\subseteq V(G)$, then we set $N_G(T)=\cup_{v\in T} N_G(v)$ and $N_G[T]=N_G(T)\cup T$.

A subset $T$ of $V(G)$ is a \emph{dominating set} of $G$ if for any $v\in V(G)\setminus T$, we have $v\in N_G(T)$. The subset $T$ of $V(G)$ is called a \emph{connected dominating set} of $G$ if it is a dominating set of $G$ such that the induced subgraph $G[T]$ is connected. A subset of $V(G)$ that is minimal with respect to this property is called a \emph{minimal connected dominating set} of $G$. A \emph{minimum connected dominating set} of $G$ is then a connected dominating set of $G$ of the minimum size.

The next theorem from \cite{CS} gives a nice description for $P_k$-free graphs for $k\geq 4$.

\begin{Theorem} \label{Dominating}
	\cite[Theorem~4]{CS}
	Let $G$ be a connected $P_k$-free graph where $k\geq 4$. If $T\subseteq V(G)$ is a minimum connected dominating set of $G$, then $G[T]$ is a $P_{k-2}$-free graph or isomorphic to $P_{k-2}$.
\end{Theorem}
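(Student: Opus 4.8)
The plan is to route everything through a longest induced path of $G[T]$ and to exploit the fact that a minimum connected dominating set is in particular a minimal one, so that its non-cut vertices must carry private neighbours. We may assume $G[T]$ contains an induced $P_{k-2}$, since otherwise $G[T]$ is $P_{k-2}$-free and there is nothing to prove. Fix a longest induced path $L=d_1-d_2-\cdots-d_\ell$ in $G[T]$; then $\ell\geq k-2$, and it suffices to prove that $\ell=k-2$ and that $T=\{d_1,\ldots,d_\ell\}$, for together these give $G[T]\cong P_{k-2}$.

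First I would observe that neither endpoint of $L$ is a cut vertex of $G[T]$: if $d_1$ were one, a vertex $w$ lying in a component of $G[T]\setminus d_1$ other than that of $d_2$ would satisfy $w\sim d_1$ and $w\not\sim d_2,\ldots,d_\ell$, so $w-d_1-\cdots-d_\ell$ would be an induced path in $G[T]$ longer than $L$, a contradiction. Since $T$ is minimal, a non-cut vertex of $G[T]$ must have a private neighbour outside $T$; hence there exist $p,q\in V(G)\setminus T$ with $N_G(p)\cap T=\{d_1\}$ and $N_G(q)\cap T=\{d_\ell\}$. Privateness makes the only possible chord of the walk $p-d_1-\cdots-d_\ell-q$ the pair $\{p,q\}$. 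If $p\not\sim q$ this walk is an induced $P_{\ell+2}$, forcing $\ell+2\leq k-1$ by $P_k$-freeness and contradicting $\ell\geq k-2$; so $p\sim q$, and then $p,d_1,\ldots,d_\ell,q$ induce a cycle $C_{\ell+2}$. Deleting one vertex of this cycle yields an induced $P_{\ell+1}$, whence $\ell+1\leq k-1$, i.e. $\ell\leq k-2$. Thus $\ell=k-2$, and we obtain a genuine induced $C_k$ on $p,d_1,\ldots,d_{k-2},q$.

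It remains to show $T=\{d_1,\ldots,d_{k-2}\}$, which I would recast as follows. Writing $W=\{d_1,\ldots,d_{k-2}\}$, the subgraph $G[W]\cong P_{k-2}$ is connected and $W\subseteq T$, so once $W$ is seen to dominate $G$, the minimality of $T$ forces $W=T$ and hence $G[T]\cong P_{k-2}$. To prove domination, suppose some $v\notin N_G[W]$ exists and take a shortest path $v=u_0-u_1-\cdots-u_s-d_j$ from $v$ to $W$ in $G$, with $u_0,\ldots,u_{s-1}\notin N_G[W]$ and $u_s$ adjacent to $W$; let $a$ and $b$ be the least and greatest indices with $u_s\sim d_a$ and $u_s\sim d_b$. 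The tail $u_0-\cdots-u_s$ is chordless and meets $W$ only through $u_s$, so I would splice it into $L$ and then traverse the induced $C_k$, opening the cycle by omitting a single vertex so as to keep everything chordless. For narrow attachments this produces an induced path on at least $k$ vertices, contradicting $P_k$-freeness.

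I expect the genuine difficulty to lie precisely in the wide attachments, where $u_s$ is adjacent to two path-vertices $d_a,d_b$ with $b\geq a+2$: the obvious routings skip $d_{a+1},\ldots,d_{b-1}$ and can fall short of length $k$, and indeed adding a single such chord to $C_k$ can leave the graph $P_k$-free. Resolving this case appears to require both hypotheses simultaneously. If the bypassed interior vertices are inessential for domination, then rerouting connectivity through the chord vertex should yield a connected dominating set strictly smaller than $T$, contradicting minimality; if instead some bypassed $d_i$ is essential — that is, carries a private neighbour $r$ — then one should splice $r$ into the detour to regain the lost length and recover an induced $P_k$, after separately excluding the stray adjacencies $r\sim p$ and $r\sim q$ coming from the external vertices of the cycle. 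Carrying out this case analysis carefully, and checking that every rerouted candidate set remains both connected and dominating, is the delicate heart of the argument; by contrast the reduction to a longest induced path, the private-neighbour extension, and the determination that $\ell=k-2$ are comparatively routine.
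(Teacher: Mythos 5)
The paper does not prove this statement; it is quoted from Camby and Schaudt \cite[Theorem~4]{CS}, so there is no internal proof to compare against, and your attempt must stand on its own. Its first half does stand: taking a longest induced path $L=d_1-\cdots-d_\ell$ in $G[T]$ with $\ell\geq k-2$, showing the endpoints are non-cut vertices of $G[T]$ (one small repair: not every vertex $w$ of a component of $G[T]\setminus d_1$ avoiding $d_2$ is adjacent to $d_1$; you must pick $w$ among the neighbours of $d_1$ in that component, which exist because $G[T]$ is connected), extracting the private neighbours $p,q$ from minimality, ruling out all chords except $\{p,q\}$, and concluding $p\sim q$, $\ell=k-2$, and an induced $C_k$ --- all of this is correct, and reducing the theorem to the claim that $W=\{d_1,\dots,d_{k-2}\}$ dominates $G$ is the right reformulation.

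The genuine gap is that this domination claim is never proved, and the plan you sketch for it has concrete holes. First, even in the ``narrow attachment'' case, your spliced path is induced only if no vertex $u_i$ of the connecting path (in particular $u_s$) is adjacent to $p$ or to $q$; the $u_i$ are only known to avoid $N_G[W]$, and $p,q\notin W$, so such adjacencies are entirely possible and are nowhere excluded. Second, the dichotomy you propose for the ``wide attachment'' case is not exhaustive. At this stage $T$ may properly contain $W$, so a bypassed vertex $d_i$ ($a<i<b$) can be a cut vertex of $G[T]$: it then has no private neighbour, yet it cannot be discarded, and the exchange set $T'=(T\setminus\{d_{a+1},\dots,d_{b-1}\})\cup\{u_s\}$ need not be connected (a vertex of $T\setminus W$ may meet the rest of $T$ only inside the removed interval). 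Nor need $T'$ dominate: a vertex outside $T$ whose neighbours in $T$ are exactly $d_{a+1}$ and $d_{a+2}$ is a private neighbour of no single $d_i$, hence invisible to your dichotomy, but it is undominated by $T'$; likewise, if $b\geq a+4$, a removed interior vertex adjacent neither to $u_s$ nor to any surviving vertex of $T$ is itself left undominated. So ``inessential, hence exchangeable'' versus ``essential, hence splice in a private neighbour'' does not cover all cases, and the rerouted sets can fail both connectivity and domination. Since everything in the theorem beyond your (correct) first half lives exactly in this step, what you have is an honest, well-aimed plan rather than a proof.
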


We are now ready to present the main theorem of this section. We would like to point out that the proof of the following theorem is mainly based on the proof of \cite[Theorem~3.2]{SK3}.

\begin{Theorem}\label{P_4-free}
	A graph is $P_4$-free if and only if each of its connected components with at least two vertices is the join product of two $P_4$-free graphs.
\end{Theorem}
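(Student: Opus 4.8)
The plan is to prove both implications, treating the equivalence componentwise since $P_4$ is connected (so an induced $P_4$ lives inside a single connected component, and a one-vertex component can never contain one). For the easy direction $(\Leftarrow)$, I would assume each component with at least two vertices is a join $C=C_1*C_2$ of $P_4$-free graphs and show $C$ itself is $P_4$-free; combined with the trivial fact that isolated components are $P_4$-free, this gives that $G$ is $P_4$-free. The key sublemma is that a join of two $P_4$-free graphs is $P_4$-free: given a putative induced $P_4$ on $a,b,c,d$ with nonedges $\{a,c\},\{a,d\},\{b,d\}$, every pair lying on opposite sides $V_1,V_2$ of the join would be an edge, so each nonadjacent pair forces its two endpoints onto the same side; the three nonedges then force all four vertices onto one side, where they would induce a $P_4$ in $C_1$ or $C_2$, a contradiction.

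For the substantive direction $(\Rightarrow)$, I would fix a connected component $C$ on vertex set $V$ with $|V|\ge 2$; it is connected and $P_4$-free, and it suffices to exhibit it as a join of two $P_4$-free graphs. First I would record the reformulation that $C$ is a (nontrivial) join $C_1*C_2$ if and only if there is a partition $V=V_1\sqcup V_2$ into nonempty parts with every $V_1$--$V_2$ pair an edge, equivalently the complement $\overline{C}$ is \emph{disconnected}; moreover the two pieces are induced subgraphs of $C$, hence automatically $P_4$-free by Proposition~\ref{induced} (or directly). So the whole task reduces to proving that $\overline{C}$ is disconnected. To get a foothold I would invoke Theorem~\ref{Dominating} with $k=4$: a minimum connected dominating set $T$ of $C$ satisfies that $C[T]$ is $P_2$-free or isomorphic to $P_2$; since $C[T]$ is connected, this means $|T|=1$ (a universal vertex) or $C[T]\cong P_2$ (a dominating edge). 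If $T=\{w\}$ is a universal vertex, then $w$ is isolated in $\overline{C}$, so $\overline{C}$ is disconnected and $C=K_1*(C\setminus w)$, finishing this case.

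The crux is the dominating-edge case $T=\{u,v\}$ with $u\sim v$, and I would prove the sharp statement that $u$ and $v$ lie in \emph{different} components of $\overline{C}$ (which immediately gives $\overline{C}$ disconnected). Suppose not, and take a shortest $\overline{C}$-path $u=w_0,w_1,\dots,w_k=v$; being shortest it is induced in $\overline{C}$. I rule out each possible length. Length $k=1$ is impossible since $u\sim_C v$. Length $k=2$ is impossible by domination: a common $\overline{C}$-neighbour $w_1$ would be $C$-adjacent to neither $u$ nor $v$, contradicting that $\{u,v\}$ is a dominating set. For $k\ge 3$ the path has at least four vertices, and $w_0,w_1,w_2,w_3$ induce a $P_4$ in $\overline{C}$ (path edges present, all chords absent because the path is induced); since $\overline{P_4}\cong P_4$, these same four vertices induce a $P_4$ in $C$, contradicting $P_4$-freeness. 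Hence no $\overline{C}$-path joins $u$ to $v$, so $\overline{C}$ is disconnected and $C$ is the required join.

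The main obstacle is exactly this last step: producing the join partition without a universal vertex. The naive guess of splitting $V$ according to adjacency to $u$ versus $v$ fails, because vertices adjacent to both $u$ and $v$ can be placed on either side (small examples such as $C_4$ and the octahedron $\overline{3K_2}$ show the placement genuinely varies). The decisive idea that bypasses this bookkeeping is to argue purely about connectivity of $\overline{C}$ rather than construct the partition by hand: the dominating edge eliminates complement-paths of length $\le 2$, while the self-complementary nature of $P_4$-freeness eliminates all longer ones, so $u$ and $v$ are forced into distinct complement-components. I would then close the proof by reassembling the componentwise statements into the global equivalence.
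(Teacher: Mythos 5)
Your proof is correct, and it diverges from the paper's in the decisive step. Both you and the paper enter through the Camby--Schaudt result (Theorem~\ref{Dominating}) applied to a minimum connected dominating set $T$ of a connected $P_4$-free component, and both dispose of the case $|T|=1$ identically (a universal vertex splits off a $K_1$ factor). The difference is in the dominating-edge case $T=\{v,w\}$: the paper constructs the join partition by hand, setting $A=N_G(v)\setminus N_G[w]$, $B=N_G(w)\setminus N_G[v]$, $C=N_G(v)\cap N_G(w)$, splitting $C$ into three subsets $C_1,C_2,C_3$ according to which of $A$, $B$ each vertex dominates, verifying four separate adjacency claims by exhibiting forbidden $P_4$'s, and finally declaring $G_1=G[\{v\}\cup B\cup C_1\cup C_3]$ and $G_2=G[\{w\}\cup A\cup C_2]$. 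You instead reformulate ``$C$ is a nontrivial join'' as ``$\overline{C}$ is disconnected'' and prove the sharper claim that $v$ and $w$ lie in distinct components of $\overline{C}$: a shortest $\overline{C}$-path between them cannot have length $1$ (they are adjacent in $C$), length $2$ (domination), or length $\geq 3$ (an induced $P_4$ in $\overline{C}$ is an induced $P_4$ in $C$, by self-complementarity of $P_4$). Your route is shorter, avoids all the neighborhood bookkeeping, and in effect recovers the classical cograph fact that a connected $P_4$-free graph on at least two vertices has disconnected complement; the paper's route is longer but produces an explicit description of the two join factors in terms of the neighborhoods of the dominating edge, which is more constructive. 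One small citation slip: Proposition~\ref{induced} is a statement about regularity of induced subgraphs, not about heredity of $P_4$-freeness, so it does not support the claim you attach it to --- but your parenthetical ``or directly'' is the correct justification, since $P_4$-freeness is trivially inherited by induced subgraphs.
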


\begin{proof}
 First, note that a join product $G_1*G_2$ is $P_4$-free if and only if both of $G_1$ and $G_2$ are $P_4$-free. On the other hand, it is clear that each of the connected components of a graph $G$ is a $P_4$-free graph if and only if $G$ is $P_4$-free. Therefore, it remains to prove that if $G$ is a connected $P_4$-free graph with at least two vertices, then the there exist two graphs $G_1$ and $G_2$ such that $G=G_1*G_2$.

 Since $G$ is connected, it has a minimum connected dominating set. Let $T$ be a minimum connected dominating set of $G$. Then it follows from Theorem~\ref{Dominating} that $G[T]$ is $P_2$-free or isomorphic to $P_2$. If $G[T]$ is $P_2$-free, then it consists of a single vertex, say $u$. Therefore, $G=G[V(G)\setminus \{u\}]*K_1$. If $G[T]$ is isomorphic to $P_2$, then it consists of an edge $e=\{v,w\}$. We set
 \[
 A=N_G(v)\setminus N_G[w],
 \]
 \[
 B=N_G(w)\setminus N_G[v],
 \]
 \[
 C=N_G(w)\cap N_G(v).
 \]

 If $A=\emptyset$, then we have $G=G[V(G)\setminus \{w\}]*K_1$. If $B=\emptyset$, then we have $G=G[V(G)\setminus \{v\}]*K_1$. Now, suppose that $A\neq \emptyset$ and $B\neq \emptyset$.

 Observe that any vertex in $A$ and any vertex in $B$ are adjacent in $G$. Indeed, if there is a vertex $v'\in A$ and a vertex $w'\in B$ that are not adjacent in $G$, then the subgraph induced on the vertex set $\{v', v, w, w'\}$ is isomorphic to $P_4$, which is a contradiction.

 Let $x\in C$. Then, we have $A\subseteq N_G(x)$ or $B\subseteq N_G(x)$. Otherwise, there is $v'\in A$ and $w'\in B$ which are not adjacent to $x$. Then, the induced subgraph of $G$ on the vertex set $\{x, v, v', w'\}$ is isomorphic to $P_4$, which is a contradiction. We set
 \[
 C_1=\{x\in C: A\subseteq N_G(x), B\nsubseteq N_G(x)\},
 \]
 \[
 C_2=\{x\in C: A\nsubseteq N_G(x), B\subseteq N_G(x)\},
 \]
 \[
 C_3=\{x\in C: A\subseteq N_G(x), B\subseteq N_G(x)\}.
 \]
 Then it follows that $C$ is partitioned by $C_1$, $C_2$ and $C_3$.


 Now, observe that any vertex in $C_1$ and any vertex in $C_3$ are adjacent in $G$. To see this, assume that there exist $x\in C_1$ and $z\in C_3$ which are not adjacent in $G$. Since $B\nsubseteq N_G(x)$, there is a vertex $w'\in B$ which is not adjacent to $x$. Then, the subgraph induced on the vertex set $\{w', z, v, x\}$ is isomorphic to $P_4$, which is a contradiction. Similarly, it follows that every vertex in $C_2$ and every vertex in $C_3$ are adjacent in $G$.

 Next, observe that any vertex in $C_1$ and any vertex in $C_2$ are adjacent in $G$. To see this, assume that there exist $x\in C_1$ and $y\in C_2$ which are not adjacent in $G$. Then, from the definitions of $C_1$ and $C_2$, it follows that $B\nsubseteq N_G(x)$ and $A\nsubseteq N_G(y)$. Thus, there exist vertices $w'\in B$ and $v'\in A$ which are not adjacent to the vertices $x$ and $y$, respectively. Therefore, the subgraph induced on the vertex set $\{x, v', w', y\}$ is isomorphic to $P_4$, a contradiction.

 Finally, let $G_1=G[\{v\}\cup B\cup C_1\cup C_3]$ and $G_2=G[\{w\}\cup A\cup C_2]$. Then, it follows that $G=G_1*G_2$, as desired.
\end{proof}

\section{Generalized binomial edge ideals of regularity~2}\label{section reg 2}


In this section we conclude Question~\ref{any regularity} by considering the case $r=2$. Furthermore, in this section, we also give a positive answer to Question~\ref{regularity characterization} for $r=2$. For this purpose, we divide the discussion of our characterization into two main parts, namely Lemma~\ref{reg 2-1} and Lemma~\ref{reg 2-2}.

\begin{Lemma} \label{reg 2-1}
 Let $G$ be a graph with $n$ vertices and  no isolated vertices, and let $m\geq n\geq 2$ be an integer. Then the following statements are equivalent:
 \begin{enumerate}
 	\item $\reg(S/J_{K_m,G})=2$.
 	\item $G=P_3$ or $K_3$ or $K_2\sqcup K_2$.
 \end{enumerate}
\end{Lemma}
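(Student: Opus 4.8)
The plan is to reduce the statement entirely to the regularity formula for disconnected graphs recorded in the remark immediately following Theorem~\ref{Matsuda-Murai}. Since the hypothesis gives $m\geq n$, that remark applies and yields
\[
\reg(S/J_{K_m,G})=n-r,
\]
where $r$ denotes the number of connected components of $G$. Thus the claimed equivalence is transformed into a purely combinatorial statement about the number of vertices and components of $G$, to be exploited together with the standing assumption that $G$ has no isolated vertices.

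For the implication $(1)\Rightarrow(2)$, I would first rewrite $\reg(S/J_{K_m,G})=2$ as $n-r=2$, that is, $r=n-2$. Because $G$ has no isolated vertices, every connected component has at least two vertices, so $n\geq 2r$. Substituting $r=n-2$ gives $n\geq 2(n-2)$, hence $n\leq 4$; combined with $r=n-2\geq 1$ this forces $n\in\{3,4\}$. If $n=3$, then $r=1$, so $G$ is a connected graph on three vertices with no isolated vertex, and therefore $G=P_3$ or $G=K_3$. If $n=4$, then $r=2$, and the only partition of four vertices into two parts each of size at least two is $2+2$; as each component is a connected graph on two vertices it must be a copy of $K_2$, giving $G=K_2\sqcup K_2$.

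The converse $(2)\Rightarrow(1)$ is then an immediate verification via the same formula: the graphs $P_3$ and $K_3$ have $(n,r)=(3,1)$, while $K_2\sqcup K_2$ has $(n,r)=(4,2)$, so in all three cases $\reg(S/J_{K_m,G})=n-r=2$.

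I do not expect a genuine obstacle here: once the formula $\reg(S/J_{K_m,G})=n-r$ is available, the argument is a short finite case analysis. The only point requiring care is the systematic use of the no-isolated-vertices hypothesis to bound each component size from below, since this is precisely what caps $n$ at $4$ and renders the enumeration finite. Without that hypothesis the value $2$ would be realized by infinitely many graphs (for instance $P_3$ together with any number of isolated vertices, which keeps $n-r=2$), so isolating this bound is the conceptual heart of the proof.
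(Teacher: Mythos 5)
Your proposal is correct and follows essentially the same route as the paper: the paper also reduces everything to $\reg(S/J_{K_m,G})=n-c$ (via Theorem~\ref{Matsuda-Murai} for the connected case and the component-wise sum of regularities for the disconnected case, which is exactly the content of the remark you invoke), and then finishes with the same finite enumeration using the no-isolated-vertices hypothesis. Your write-up just packages the two cases through the remark's formula rather than treating connected and disconnected graphs separately, which is a cosmetic difference only.
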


\begin{proof}
First suppose that $G$ is a connected graph. Then it follows from  Theorem~\ref{Matsuda-Murai} that $\reg(S/J_{K_m,G})=2$ if and only if $n=3$. The only connected graphs with $3$ vertices are $P_3$ and $K_3$. Therefore $G=P_3$ or $K_3$ and in both cases, we have $\reg(S/J_{K_m,G})=2$ by \cite[Corollary~3.6]{IrCh} and Proposition~\ref{both complete}. Next, suppose that $G$ is disconnected with connected components $G_1,\ldots, G_c$ where $c\geq 2$ and for each $i$, $G_i$ is a graph with $n_i$ vertices. Then
\[
\reg(S/J_{K_m,G})=\sum_{i=1}^c \reg(S/J_{K_m,G_i})
\]     	
and $m>n_i$ for each $i$. Again, using Theorem~\ref{Matsuda-Murai}, it follows that
\[
\reg(S/J_{K_m,G})=\sum_{i=1}^c (n_i-1)=n-c.
\]
Hence $\reg(S/J_{K_m,G})=2$ if and only if $n=c+2$ if and only if $G=K_2\sqcup K_2$, since $G$  has no  isolated vertices. This completes the proof.
\end{proof}

Next, for the second part of this section, we need to establish some notation for complete multipartite graphs. Let  $r\geq 2$ be  an integer,  let $K_{t_1,\ldots,t_r}$ be a complete $r$-partite graph with the partition of its vertex set $V=V_1\sqcup \cdots\sqcup V_r$ and $|V_i|=t_i\geq 1$ for each $i=1,\ldots,r$
 such that each vertex of $V_i$ is adjacent to each vertex of $V_j$ for $1\leq i\neq j\leq r$ and no two vertices in each $V_i$ are adjacent in $K_{t_1,\ldots,t_r}$. We can assume that $t_i\leq t_{i+1}$ for each $i=1,\ldots,r-1$. It is clear from the definition that any complete $r$-partite graph can be seen as a join product, i.e, $K_{t_1}^c*\cdots *K_{t_r}^c$, where each $K_{t_i}^c$ is the complement of the complete  graph $K_{t_i}$.




\begin{Lemma} \label{reg 2-2}
Let $G$ be a graph with $n$ vertices and with no isolated vertices, and let
$n> m\geq 3$ be an integer. Then $\reg(S/J_{K_m,G})=2$ if and only if $m=3$ and $G$ is either
\begin{enumerate}
	\item $K_{t_1,\ldots,t_r}$ with $t_i\leq 2$ for each $i\in [r]$, or
	\item $K_2\sqcup K_2$.
\end{enumerate}
\end{Lemma}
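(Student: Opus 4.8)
The plan is to prove both directions, using the join-product machinery from Section~\ref{join} together with the $P_4$-free characterization from Theorem~\ref{P_4-free}. The setting is a graph $G$ with $n$ vertices, no isolated vertices, and $n>m\geq 3$, and I want to show $\reg(S/J_{K_m,G})=2$ iff $m=3$ and $G$ is either a complete multipartite graph $K_{t_1,\ldots,t_r}$ with all parts of size at most $2$, or $K_2\sqcup K_2$.

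For the reverse direction (the easier one), I would verify that each listed graph has regularity $2$. The disconnected case $G=K_2\sqcup K_2$ with $m=3$ is handled exactly as in Lemma~\ref{reg 2-1}: since $m=3>2=n_i$ for each component, Theorem~\ref{Matsuda-Murai} gives $\reg(S/J_{K_3,K_2})=1$ for each of the two components, and regularity is additive over connected components, yielding $1+1=2$. For the connected case $G=K_{t_1,\ldots,t_r}$ with each $t_i\leq 2$, I would write $G$ as the iterated join product $K_{t_1}^c*\cdots*K_{t_r}^c$ and induct using Theorem~\ref{reg-join}. Each factor $K_{t_i}^c$ has $t_i\leq 2$ vertices and hence $\reg(S/J_{K_m,K_{t_i}^c})=0$. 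With $m=3$, Theorem~\ref{reg-join} forces $\reg(S/J_{K_3,H_1*H_2})\in\{0,2,3\}$; I would need to pin this to exactly $2$, which is where the lower bound $\reg\geq m-1=2$ from \eqref{lower bound} enters, combined with an upper-bound argument showing the value cannot reach $3$ unless a part of size larger than $2$ appears. A clean way is to note that $K_{2,2}$ (which is $P_4$-free and equals $K_2^c*K_2^c$) and $K_{1,1,\ldots}$-type graphs serve as the base cases; Example~\ref{triangle-whisker} and Remark~\ref{lower and upper bound} already suggest how the value $2$ versus $3$ is controlled by whether an induced subgraph forcing regularity $3$ (such as the graph $B$, or $K_{1,3}$) occurs.

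For the forward direction, suppose $\reg(S/J_{K_m,G})=2$ with $n>m\geq 3$ and no isolated vertices. First I would argue that $G$ must be $P_4$-free: since $\reg(S/J_{K_m,P_4})\geq 3$ (the path on $4$ vertices gives regularity $3$ by \cite[Corollary~3.6]{IrCh} once $m\geq 3$), Proposition~\ref{induced} would force any induced $P_4$ to push the regularity to at least $3$, a contradiction. Hence $G$ is $P_4$-free, so by Theorem~\ref{P_4-free} each connected component with at least two vertices is a join product of two $P_4$-free graphs. Next I would show $m=3$: if $m\geq 4$, then on any connected component the lower bound $\reg\geq m-1\geq 3$ from Theorem~\ref{reg-join} (or directly from \cite[Corollary~3.8]{Ku1}) would contradict $\reg=2$, after checking the component is large enough relative to $m$; the case analysis of Corollary~\ref{equal to m-1} is the relevant tool here. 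Finally, with $m=3$ fixed, I would determine which $P_4$-free graphs can have regularity exactly $2$. If $G$ is disconnected, additivity of regularity over components together with $\reg(S/J_{K_3,G_i})\geq 1$ for each nontrivial component (no isolated vertices) forces exactly two components each of regularity $1$, and by Theorem~\ref{Matsuda-Murai} each such component is a single edge $K_2$, giving $K_2\sqcup K_2$. If $G$ is connected, I would use the structure $G=G_1*G_2$ from Theorem~\ref{P_4-free} and Corollary~\ref{equality of lower and upper bounds}/\ref{equal to m-1} to rule out any induced subgraph isomorphic to $B$ (from Example~\ref{triangle-whisker}) or $K_{1,3}$, both of which force regularity $3$; excluding these, together with $P_4$-freeness, should combinatorially characterize $G$ as a complete multipartite graph all of whose parts have size at most $2$.

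The main obstacle, I expect, is the forward direction's final connected case: translating the regularity condition $\reg=2$ into the exact combinatorial constraint ``complete multipartite with parts of size $\leq 2$.'' The delicate point is showing that \emph{any} part of size $\geq 3$, or any deviation from complete multipartite structure among $P_4$-free graphs, produces an induced subgraph (such as $K_{1,3}$, $B$, or a larger complete-multipartite piece like $K_{1,1,1}=K_3$ joined inappropriately) whose generalized binomial edge ideal has regularity at least $3$. I would handle this by a careful induced-subgraph search: a part $V_i$ with $t_i\geq 3$ yields an induced $K_{1,3}$ (a star), and I would invoke the computation $\reg(S/J_{K_3,K_{1,3}})=3$ from \cite[Lemma~3.8]{Zhu3} together with Proposition~\ref{induced} to derive the contradiction. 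The bookkeeping of exactly which small forbidden subgraphs suffice, and verifying their regularity values, is the technical heart of the argument.
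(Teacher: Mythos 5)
Your overall route matches the paper's: force $P_4$-freeness via Proposition~\ref{induced} and $\reg(S/J_{K_3,P_4})=3$, apply Theorem~\ref{P_4-free} to write a connected $G$ as a join, get $m=3$ from the lower bound \cite[Corollary~3.8]{Ku1}, exclude $B$ and $K_{1,3}$, and use additivity over components in the disconnected case. However, there are two genuine gaps. The first is in your reverse direction: you propose to prove $\reg(S/J_{K_3,K_{t_1,\ldots,t_r}})=2$ (all $t_i\leq 2$) by iterating Theorem~\ref{reg-join} and then arguing that the value $2$ versus $3$ is ``controlled'' by the absence of induced subgraphs of regularity $3$. This cannot work: Theorem~\ref{reg-join} only pins the value to $\{2,3\}$ here, and the absence of a forbidden induced subgraph gives no upper bound, since Proposition~\ref{induced} yields lower bounds only. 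Indeed, $K_{1,3}=K_1*K_3^c$ and $K_{1,1,2}=K_1*K_1*K_2^c$ are both iterated joins of factors of regularity $0$, yet the first has regularity $3$ and the second has regularity $2$, so the join bounds (and any induced-subgraph reasoning layered on them) cannot distinguish the two situations; deciding when the upper bound $m$ is attained is even posed as an open problem in Question~\ref{join-m,m}. The paper closes this direction by citing the exact formula for complete multipartite graphs, \cite[Theorem~3.9]{Zhu3}, and your proof needs that result (or an equivalent computation) as an input.

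The second gap is in the forward direction's connected case. You correctly identify the forbidden subgraphs ($P_4$, the graph $B$ of Example~\ref{triangle-whisker}, and $K_{1,3}$), but the derivation of the structure ``complete multipartite with all parts of size at most $2$'' from these conditions is exactly what you defer as ``the technical heart,'' and it is the part the paper actually has to prove: write $G=G_1*G_2$, note each $G_i$ is $(K_1\sqcup K_2)$-free (otherwise an induced $B$ arises using a vertex of the other factor) and $K_3^c$-free (otherwise $K_{1,3}$ arises), then induct on the number $b(G)$ of nonadjacent pairs, peeling off a $K_2^c$ join factor from a nonadjacent pair $v,w$ and applying the induction hypothesis to $\hat{G_1}$, with the regularity-$0$ and regularity-$1$ cases settled by \cite[Theorem~10]{SK1}. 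Your sketch only covers the easy observation that a part of size $\geq 3$ yields an induced $K_{1,3}$; it does not show that $G$ is complete multipartite in the first place. A minor further point: in the disconnected case you invoke Theorem~\ref{Matsuda-Murai} to conclude that each regularity-$1$ component is $K_2$, but that theorem requires $m\geq n_i$, which is not known there; the correct tool is the characterization $\reg(S/J_{K_m,H})=1$ if and only if $H=K_2$ from \cite[Theorem~10]{SK1}.
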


\begin{proof}
If $G=K_{t_1,\ldots,t_r}$ with each $t_i\leq 2$, then $\reg(S/J_{K_3,G})=2$ by \cite[Theorem~3.9]{Zhu3}. If $G=K_2\sqcup K_2$, then by Proposition \ref{both complete}, we have
\[
\reg(S/J_{K_3,G})=2 \reg(S/J_{K_3,K_2})=2.
\]

 To prove the converse,  we first assume that $G$ is connected. Then $\reg(S/J_{K_m,G})\geq m-1$ by \cite[Corollary~3.8]{Ku1}. Since $\reg(S/J_{K_m,G})=2$, it follows that $m\leq 3$, which implies that $m=3$ by the assumption.
 By \cite[Corollary~3.6]{IrCh}, $\reg(S/J_{K_3,P_4})=3$, which forces  $G$  to be $P_4$-free by Proposition~\ref{induced}. Thus,  by Theorem~\ref{P_4-free},  $G=G_1*G_2$ where $G_1$ and $G_2$ are $P_4$-free graphs.
 Note that since both  $G_1$ and $G_2$ are induced subgraphs of $G$, we have $\reg(S/J_{K_3,G_i})\leq 2$. It follows from Example~\ref{triangle-whisker} and Proposition~\ref{induced} that
 $G$ is a $B$-free graph. In particular, both of $G_1$ and $G_2$ are $(K_1\sqcup K_2)$-free, since they have at least one vertex.
 On the other hand,  by \cite[Lemma~3.8]{Zhu3}, we have $\reg(S/J_{K_3,K_{1,3}})=3$, so $G$ is also $K_{1,3}$-free, and $G_1$ and $G_2$ are $K_3^c$-free.

 We proceed the proof by induction on the number of pairs of nonadjacent vertices in the graph. We denote this number for a graph $H$ by $b(H)$. If $b(G)=0$, then 
 $G$ is complete and isomorphic to $K_{t_1,\ldots,t_n}$ with $t_1=\cdots=t_n=1$. Now suppose that $b(G)>0$, then either $b(G_1)>0$ or $b(G_2)>0$. We may assume that $b(G_2)>0$. Then, there exist two nonadjacent vertices $v$ and $w$ in $V(G_2)$. Let $H$ be the induced subgraph of $G_2$ on the vertex set $V(G_2)\setminus \{v,w\}$. Then, it follows that the vertices $v$ and $w$ are adjacent to all the vertices of $H$, because $G_2$ is $K_{3}^c$-free as well as $(K_1\sqcup K_2)$-free. This implies that $G_2=H*K_2^c$ where $K_2^c$ is indeed isomorphic to the induced subgraph of $G_2$ on $\{v,w\}$. Therefore, we have $G=\hat{G_1}*K_2^c$ where $\hat{G_1}$ is the subgraph of $G$ induced on the vertex set $V(G_1)\cup V(H)$. Since $\hat{G_1}$ is an induced subgraph of $G$, it follows from Proposition~\ref{induced} that $\reg(S/J_{K_3,\hat{G_1}})\leq 2$. Now, we distinguish the following cases:

  (i) If $\reg(S/J_{K_3,\hat{G_1}})=0$, then $\hat{G_1}$ has no edges. Therefore, $V(H)=\emptyset$ and $G_1$ consists of at most two isolated vertices, since it is $K_3^c$-free. Thus, $G_1=K_1$ or $K_2^c$, and hence $G=K_{1,2}$ or $K_{2,2}$.

  (ii) If $\reg(S/J_{K_3,\hat{G_1}})=1$, then it follows from \cite[Theorem~10]{SK1} that $\hat{G_1}=K_2$ which implies that $G=K_{1,1,2}$.

  (iii) If $\reg(S/J_{K_3,\hat{G_1}})=2$, then $\hat{G_1}$ is connected. Indeed, if $\hat{G_1}$ is disconnected, then $V(H)=\emptyset$ and $G_1$ is disconnected. Since $G_1$ is $K_3^c$-free and $(K_1\sqcup K_2)$-free, it follows that $\hat{G_1}=G_1=K_2^c$, and hence $\reg(S/J_{K_3,\hat{G_1}})=0$, a contradiction. It is clear that $b(\hat{G_1})<b(G)$. Therefore, by induction hypothesis, $\hat{G_1}=K_{t_1,\ldots,t_s}$ for some $s$ with $t_i\leq 2$ for each $i\in [s]$. Then, we have $G=K_{t_1,\ldots,t_s,2}$.

Finally, we assume that $G$ is a disconnected graph with connected components $H_1,\ldots, H_c$ where $c\geq 2$ and for each $i$, $H_i$ is a graph with $n_i$ vertices. So
\[
\reg(S/J_{K_3,G})=\sum_{i=1}^c \reg(S/J_{K_3,H_i}).
\]     	
This implies that $c=2$ and $\reg(S/J_{K_3,H_i})=1$ for each $i=1,2$, since $G$ has no isolated vertices. So $H_1=K_2$ and $H_2=K_2$,  which  completes the proof.
\end{proof}

Finally, we conclude with the main theorem of this section, demonstrating that Question~\ref{regularity characterization} has a positive answer when $r=2$ through application of Lemmas~\ref{reg 2-1} and~\ref{reg 2-2}. This theorem moreover completes the answer to Question~\ref{any regularity}.

\begin{Theorem} \label{reg 2-main}
	Let $m\geq 3$ and $G$ be a graph with $n\geq 3$ vertices and no isolated vertices. Then $\reg(S/J_{K_m,G})=2$ if and only if one of the following conditions holds:
	\begin{enumerate}
		\item $G$ is $P_3$, $K_3$ or $K_2\sqcup K_2$;
		\item $m=3$ and $G=K_{t_1,\ldots,t_r}$ with $t_i\leq 2$ for each $i\in [r]$.
	\end{enumerate}
\end{Theorem}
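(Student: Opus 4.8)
The plan is to prove the theorem by splitting into the two complementary regimes $m\ge n$ and $n>m$, which are governed respectively by Lemma~\ref{reg 2-1} and Lemma~\ref{reg 2-2}, and then to match the two resulting lists of graphs against the two conditions in the statement. Since $m\ge 3$ and $n\ge 3$ are assumed, every admissible pair $(m,n)$ lies in exactly one of these regimes; moreover $m\ge n\ge 3\ge 2$ in the first and $n>m\ge 3$ in the second, so the hypotheses of the respective lemmas are automatically met.

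First I would dispose of the regime $m\ge n$. Here Lemma~\ref{reg 2-1} applies directly and gives $\reg(S/J_{K_m,G})=2$ if and only if $G\in\{P_3,K_3,K_2\sqcup K_2\}$, which is exactly condition~(1). To see that condition~(2) contributes nothing beyond this in this regime, I would note that if $m=3$ and $m\ge n$, then $n=3$ (using $n\ge 3$), and the only complete multipartite graphs on three vertices with every part of size at most~$2$ are $K_{1,2}=P_3$ and $K_{1,1,1}=K_3$, both already listed in condition~(1).

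Next I would treat the regime $n>m\ge 3$. Here Lemma~\ref{reg 2-2} gives $\reg(S/J_{K_m,G})=2$ if and only if $m=3$ and $G$ is either a complete multipartite graph $K_{t_1,\ldots,t_r}$ with all $t_i\le 2$ or $G=K_2\sqcup K_2$. The first alternative is precisely condition~(2), while the second, $K_2\sqcup K_2$ (which has $n=4$), is the only graph from condition~(1) compatible with $n\ge 4$. Combining the two regimes then yields the stated biconditional, with the identifications $P_3=K_{1,2}$ and $K_3=K_{1,1,1}$ explaining the apparent overlap between the two conditions.

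The one point I would verify explicitly, rather than treat as routine, is the bookkeeping of these overlaps across the boundary values of $m$. Concretely, $K_2\sqcup K_2$ (with $n=4$) is certified by Lemma~\ref{reg 2-2} when $m=3$ but by Lemma~\ref{reg 2-1} when $m\ge 4$, so I would confirm that it yields regularity~$2$ for every $m\ge 3$ and that the two lemmas agree on it. Likewise $P_3$ and $K_3$ belong simultaneously to conditions~(1) and~(2), and I would check that the reconciliation neither drops them nor spuriously imposes $m=3$ on them. Since the substantive algebraic and combinatorial work has already been carried out in the two lemmas, no new input is needed here: the theorem is a careful merging of their statements, and this merging is the only real content.
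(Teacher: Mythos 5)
Your proposal is correct and matches the paper's approach exactly: the paper offers no separate argument for Theorem~\ref{reg 2-main}, deriving it precisely by combining Lemma~\ref{reg 2-1} (the regime $m\ge n$) with Lemma~\ref{reg 2-2} (the regime $n>m\ge 3$). Your explicit bookkeeping of the overlaps ($P_3=K_{1,2}$, $K_3=K_{1,1,1}$, and $K_2\sqcup K_2$ across both regimes) is sound and simply makes visible the merging the paper leaves implicit.
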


\section{Extremal Gorenstein generalized binomial edge ideals}\label{extremal}

After known all generalized binomial edge ideals of regularity~2, in this section we will search for extremal Gorenstein generalized binomial edge ideals. First we recall the definition. It is well-known that if $I$ is a graded ideal in $R$, a polynomial ring  over a field, such that  $R/I$ is Gorenstein, then $I$ can never have linear resolution unless it is a principal ideal. Then, roughly speaking, if the minimal graded free resolution of $I$ is as linear as possible, then $I$ is called \emph{extremal Gorenstein}. More precisely, in the case of generalized binomial edge ideal $J_{K_m,G}$ of a graph $G$ which is generated in degree~$2$, $J_{K_m,G}$ is called extremal Gorenstein if $\reg (S/J_{K_m,G})=2$ and $S/J_{K_m,G}$ is Gorenstein. In the case of $m=2$, the desired characterization was discussed in \cite{SK3} with an improvement in \cite{RS}.

First, we characterize all graphs $G$ for which $S/J_{K_m,G}$ is Cohen-Macaulay with regularity $2$.

\begin{Theorem}\label{CM reg 2}
	Let $m\geq 3$ and $G$ be a graph with $n\geq 3$ vertices and no isolated vertices. Then the following statements are equivalent:
	\begin{enumerate}
		\item $S/J_{K_m,G}$ is Cohen-Macaulay and $\reg (S/J_{K_m,G})=2$;
		\item $G=K_3$ or $G=K_2\sqcup K_2$.
	\end{enumerate}
\end{Theorem}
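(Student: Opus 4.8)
The plan is to read off the candidate graphs from the regularity-$2$ classification in Theorem~\ref{reg 2-main} and then decide Cohen--Macaulayness of each candidate, using the structure theory of determinantal ideals together with the fact that a Cohen--Macaulay ring is unmixed.

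For the implication $(2)\Rightarrow(1)$ I would argue directly. If $G=K_3$, then $J_{K_m,K_3}$ is precisely the ideal $I_2(X)$ of $2\times 2$ minors of the generic $m\times 3$ matrix $X=(x_{ij})$, so $S/J_{K_m,K_3}$ is a classical determinantal ring and hence Cohen--Macaulay; its regularity is $\min\{m-1,2\}=2$ by Proposition~\ref{both complete}. If $G=K_2\sqcup K_2$, the two components occupy disjoint sets of columns, so $J_{K_m,K_2\sqcup K_2}$ is the sum of two determinantal ideals in disjoint variables and $S/J_{K_m,K_2\sqcup K_2}$ is the tensor product over $\KK$ of two copies of the Cohen--Macaulay ring $S'/J_{K_m,K_2}$. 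A tensor product of Cohen--Macaulay $\KK$-algebras is Cohen--Macaulay, and regularity is additive under such tensor products, so $\reg(S/J_{K_m,K_2\sqcup K_2})=1+1=2$, since each factor has regularity $\min\{m-1,1\}=1$.

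For $(1)\Rightarrow(2)$ I would invoke Theorem~\ref{reg 2-main}: under the standing hypotheses the only regularity-$2$ graphs are $P_3$, $K_3$, $K_2\sqcup K_2$, and, when $m=3$, the complete multipartite graphs $K_{t_1,\dots,t_r}$ with all $t_i\le 2$. It then remains to discard every candidate other than $K_3$ and $K_2\sqcup K_2$. The cleanest tool is unmixedness: if $S/J_{K_m,G}$ is Cohen--Macaulay, then all minimal primes of $J_{K_m,G}$ share one dimension. Using Rauh's decomposition $J_{K_m,G}=\bigcap_{T\in\calC(G)}P_T(K_m,G)$ and the explicit shape of the primes $P_T(K_m,G)$, I would compute $\dim S/P_T$ over $T\in\calC(G)$. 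For $G=P_3$ this already settles the matter: both $\emptyset$ and the cut vertex $\{2\}$ lie in $\calC(P_3)$, while $P_\emptyset=I_2(X)$ (an $m\times 3$ determinantal prime) and $P_{\{2\}}=(x_{i2}:i\in[m])$ give quotients of dimension $m+2$ and $2m$ respectively, which differ for $m\ge 3$; hence $S/J_{K_m,P_3}$ is not unmixed and cannot be Cohen--Macaulay.

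The step I expect to be the main obstacle is the complete multipartite family for $m=3$, precisely because unmixedness does not obviously separate its members: the complete graphs $K_n=K_{1,\dots,1}$ have the single determinantal prime $P_\emptyset$, and the mixed members such as $K_{2,2}$ and $K_{1,1,2}$ turn out to be equidimensional as well, so one cannot dispose of them by a mere dimension count. For these I would pass to a finer invariant and compute $\depth S/J_{K_3,G}$ directly, exploiting the join decomposition $G=G_1*G_2$ furnished by Theorem~\ref{P_4-free} together with the short exact sequences used in the proof of Theorem~\ref{reg-join}, and feeding them into the long exact sequence in local cohomology in order to compare $\depth S/J_{K_3,G}$ with $\dim S/J_{K_3,G}$. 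Carrying out this depth computation, and thereby pinning down exactly which complete multipartite graphs with $m=3$ are Cohen--Macaulay, is the crux of the argument; combined with the disconnected case $K_2\sqcup K_2$ it then yields the claimed characterization.
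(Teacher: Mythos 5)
You get direction $(2)\Rightarrow(1)$ right, essentially as the paper does: the paper derives Cohen--Macaulayness of $S/J_{K_m,K_3}$ from the block-graph criterion of \cite[Corollary~3.5]{IrCh} rather than from the determinantal structure, and it treats $K_2\sqcup K_2$ by the same tensor-product argument you give. Your elimination of $P_3$ by unmixedness---the minimal primes $P_\emptyset(K_m,P_3)$ and $P_{\{2\}}(K_m,P_3)$ have quotients of dimensions $m+2$ and $2m$, which differ for $m\geq 3$---is also correct, and is a nice self-contained alternative to the paper's citation. The genuine gap is precisely the step you yourself call the crux. For the $m=3$ candidates $K_{t_1,\dots,t_r}$ with some $t_i=2$ you only announce that you would compute $\depth(S/J_{K_3,G})$ using the join decomposition, the exact sequences from the proof of Theorem~\ref{reg-join}, and local cohomology; no such computation is carried out, and it is not routine. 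The paper does not do it from scratch either: it quotes the closed formulas of \cite{Zhu3}, namely $\dim(S/J_{K_m,K_{t_1,\ldots,t_r}})=\max\{m+n-1,mt_r\}$ and $\depth(S/J_{K_m,K_{t_1,\ldots,t_r}})=m+\min\{t_i: t_i>1\}$, which for $m=3$, $t_r=2$ and $n\geq 4$ give $\depth=5<n+2=\dim$, hence failure of Cohen--Macaulayness. Unless you invoke these results or genuinely perform an equivalent computation, your argument is a plan, not a proof.

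There is a second, structural problem that you would have hit had you carried the plan out: it cannot terminate at statement (2). Your list of $m=3$ candidates contains the complete graphs $K_r=K_{1,\dots,1}$ with $r\geq 4$; by Proposition~\ref{both complete} these satisfy $\reg(S/J_{K_3,K_r})=\min\{2,r-1\}=2$, and $J_{K_3,K_r}$ is the ideal of $2$-minors of a generic $3\times r$ matrix, so $S/J_{K_3,K_r}$ is Cohen--Macaulay by Hochster--Eagon \cite{HE}. Thus these graphs satisfy (1) but do not appear in (2), and your proposed depth computation would retain them rather than eliminate them; an honest completion of your argument produces the classification ``(1) holds iff $G=K_3$, $G=K_2\sqcup K_2$, or $m=3$ and $G=K_r$ with $r\geq 3$,'' not the claimed equivalence. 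You mention these complete graphs when discussing equidimensionality but never return to them. (The same tension is visible in the paper itself: case (iii) of its proof concedes that $S/J_{K_m,K_r}$ is Cohen--Macaulay when all $t_i=1$, yet the printed statement omits this family. A correct treatment must either dispose of these graphs---which is impossible---or note explicitly that the statement has to be amended to include the case $m=3$, $G=K_r$, $r\geq 4$.)
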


\begin{proof}
We only need to determine which of the cases listed in Theorem~\ref{reg 2-main} gives a Cohen-Macaulay generalized binomial edge ideal. We distinguish three cases:

(i) It follows from \cite[Corollary~3.5]{IrCh} that if $G$ is a connected block graph, then $S/J_{K_m,G}$ is Cohen-Macaulay if and only if $G$ is a complete graph.
Therefore, $G$ can not be $P_3$, but $S/J_{K_m,K_3}$ is Cohen--Macaulay.

(ii) If $G=K_2\sqcup K_2$, then  $S/J_{K_m,K_2\sqcup K_2}\cong (S_1/J_{K_m,K_2})\otimes_{\KK} (S_2/J_{K_m,K_2})$, where $S_1$ and $S_2$ are the corresponding polynomial rings. Thus $S/J_{K_m,K_2\sqcup K_2}$  is Cohen-Macaulay, since $J_{K_m, K_2}$ is just the binomial edge ideal $J_{K_m}$, and hence $S/J_{K_m, K_2}$ is Cohen-Macaulay. So $G$ can also be $K_2\sqcup K_2$.

(iii) Suppose $G=K_{t_1,\ldots,t_r}$ is a  complete $r$-partite graph, where each $t_i\leq 2$. If each  $t_i=1$, then $G$ is a complete graph $K_r$ with $r\geq 3$. So $S/J_{K_m,G}$ is Cohen-Macaulay by \cite[Remark 3.2]{Zhu3}.
Now  we  assume that $t_r=2$. By  \cite[Proposition~3.5]{Zhu3}, we have
\[
\dim(S/J_{K_m,G})=\max \{m+n-1,mt_r\},
\]
where $n=\sum\limits_{i=1}^{r}t_i$. In particular, we have
\begin{equation}\label{dim}
\dim(S/J_{K_3,G})=\max \{n+2,6\}.
\end{equation}

If $n=3$, then $t_1=1$ and $t_2=2$. In this case, $G=P_3$, which was discussed earlier that $S/J_{K_m,G}$ is not Cohen--Macaulay. In the following,  we assume that $n\geq 4$. By Eq. (\ref{dim}), we have
\[
\dim (S/J_{K_3,G})=n+2.
\]
On the other hand, it has been shown in \cite[Theorem~3.6]{Zhu3} that
\[
\depth( S/J_{K_m,G})=m+\min\{t_i: t_i>1, i\in [r]\}.
\]
It follows that  $\depth (S/J_{K_3,G})=5$, i.e.  $\dim (S/J_{K_3,G})\neq \depth (S/J_{K_3,G})$, since $n\geq 4$. So $S/J_{K_3,G}$ is not Cohen-Macaulay.             	
\end{proof}

Now, we determine extremal Gorenstein generalized binomial edge ideals which are indeed rare.

\begin{Corollary}\label{Gor reg 3}
	Let $m\geq 3$ and  $G$ be a graph with no isolated vertices. Then $J_{K_m,G}$ is extremal Gorenstein if and only if $m=3$ and $G=K_3$.
\end{Corollary}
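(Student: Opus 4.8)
The plan is to combine the Cohen--Macaulay classification of Theorem~\ref{CM reg 2} with the classical theory of generic determinantal rings. Since $J_{K_m,G}$ is extremal Gorenstein precisely when $\reg(S/J_{K_m,G})=2$ and $S/J_{K_m,G}$ is Gorenstein, and since Gorenstein rings are Cohen--Macaulay, any extremal Gorenstein $J_{K_m,G}$ must in particular satisfy the hypotheses of Theorem~\ref{CM reg 2}. First I would dispose of the case $n=2$: as $G$ has no isolated vertices, $G=K_2$, and by Proposition~\ref{both complete} we get $\reg(S/J_{K_m,K_2})=\min\{m-1,1\}=1\neq 2$, so this graph is never extremal Gorenstein. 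Hence we may assume $n\geq 3$ and apply Theorem~\ref{CM reg 2}, which leaves only the two candidates $G=K_3$ and $G=K_2\sqcup K_2$.

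It then remains to decide, for each candidate, for which $m$ the ring $S/J_{K_m,G}$ is Gorenstein. The key observation is that for complete graphs the generalized binomial edge ideal is a generic determinantal ideal: the generators $p_{e,f}$ of $J_{K_m,K_t}$ range over \emph{all} $2\times 2$ minors of the generic $m\times t$ matrix $X=(x_{ij})$, so $S/J_{K_m,K_t}=S/I_2(X)$. For $G=K_3$ this gives $S/J_{K_m,K_3}=S/I_2(X)$ with $X$ of size $m\times 3$. By the classical criterion for generic determinantal rings (Bruns--Vetter), $S/I_2(X)$ is Gorenstein if and only if $X$ is square, which here means $m=3$. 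Thus $G=K_3$ yields an extremal Gorenstein ideal exactly when $m=3$.

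For $G=K_2\sqcup K_2$, I would use the tensor product decomposition already recorded in the proof of Theorem~\ref{CM reg 2}, namely $S/J_{K_m,K_2\sqcup K_2}\cong (S_1/J_{K_m,K_2})\otimes_{\KK}(S_2/J_{K_m,K_2})$. A tensor product of Cohen--Macaulay graded $\KK$-algebras is Gorenstein if and only if each factor is Gorenstein (its canonical module is the tensor product of the canonical modules of the factors), so it suffices to examine $S_i/J_{K_m,K_2}$. But $J_{K_m,K_2}=I_2(X)$ with $X$ now of size $m\times 2$ (equivalently, the classical binomial edge ideal of $K_m$), which by the same determinantal criterion is Gorenstein only when $m=2$. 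Since $m\geq 3$, neither factor is Gorenstein, hence the tensor product is not Gorenstein, and $K_2\sqcup K_2$ is never extremal Gorenstein. Combining the two cases yields the claim.

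The main obstacle is really just assembling the correct external input: the identification of $J_{K_m,K_t}$ with the generic $2\times 2$ determinantal ideal $I_2(X_{m\times t})$, and the fact that such a determinantal ring is Gorenstein exactly in the square case. Once these are in place the argument is short, since Theorem~\ref{CM reg 2} has already pruned the list down to two graphs, and the remaining work is a direct Gorenstein check on two explicit determinantal (respectively tensor-product) rings, concluding that $J_{K_m,G}$ is extremal Gorenstein if and only if $m=3$ and $G=K_3$.
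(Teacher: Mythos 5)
Your proposal is correct and follows essentially the same route as the paper: reduce via Theorem~\ref{CM reg 2} (after disposing of the $n=2$ case) to the two candidates $K_3$ and $K_2\sqcup K_2$, settle $K_3$ by the Bruns--Vetter Gorenstein criterion for generic determinantal rings (the paper cites \cite[Corollary~8.9]{BV}), and rule out $K_2\sqcup K_2$ via the tensor-product decomposition $S/J_{K_m,K_2\sqcup K_2}\cong (S_1/J_{K_m,K_2})\otimes_{\KK}(S_2/J_{K_m,K_2})$. The only cosmetic difference is in this last step, where the paper multiplies last Betti numbers (types) of the two non-Gorenstein determinantal factors while you multiply canonical modules; the two arguments are standard and equivalent.
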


\begin{proof}
	It follows from \cite[Corollary~8.9]{BV} that $S/J_{K_m,K_2}$ is not Gorenstein. Now, we assume that $n\geq 3$. We only need to investigate about the Gorenstein property of the cases appearing in the statement of Theorem~\ref{CM reg 2}, namely $S/J_{K_m,K_3}$ and $S/J_{K_m,K_2\sqcup K_2}$ with $m\geq 3$.
	
	Again by \cite[Corollary~8.9]{BV}, we have that $S/J_{K_m,K_3}$ is Gorenstein if and only if $m=3$.
	Note that $S/J_{K_m,K_2}$ is a determinantal ring and thus Cohen-Macaulay, so its last Betti number is not $1$. On the other hand, the minimal graded free resolution of $S/J_{K_m,K_2\sqcup K_2}$ can be obtained by the tensor product of the minimal graded free resolutions of $S_1/J_{K_m,K_2}$ and $S_2/J_{K_m,K_2}$, where $S_1$ and $S_2$ are the corresponding polynomial rings with disjoint sets of variables. Thus the last Betti number of $S/J_{K_m,K_2\sqcup K_2}$ is also not  $1$, and hence $S/J_{K_m,K_2\sqcup K_2}$ is not Gorenstein. This completes the proof.
\end{proof}

\section{Further remarks and questions}\label{Further remarks}

Beside Question~\ref{regularity characterization} which is still widely open, in this section we would also like to make more specific questions and remark which are interesting to be done further.

We gave some examples of graphs $G$ with $n\geq 4$ vertices for which $\reg (S/J_{K_3,G})=3$. Then, as a further step, it can be reasonable to consider the following question.

\begin{Question}\label{K_3,reg=3}
	Is there an explicit combinatorial characterization for all graphs $G$ with $\reg S/J_{K_3,G}=3$?
\end{Question}

On the other hand, in particular in Theorem~\ref{reg 2-main} we characterized all graphs $G$ with $n\geq 3$ vertices and $\reg (S/J_{K_3,G})=2$. This temps us to ask the following question.

\begin{Question}\label{K_m,reg=m-1}
	Is there an explicit combinatorial characterization for all graphs $G$ with $\reg (S/J_{K_4,G})=3$? More generally, what about the case $\reg (S/J_{K_m,G})=m-1$ for any $m\geq 4$?
\end{Question}

In Section~\ref{join}, we discussed several cases for which the inequalities given in Theorem~\ref{reg-join} become equality. In particular, a special attention was given to the cases $\reg(S/J_{K_m,G_1*G_2})=m-1$ and $\reg(S/J_{K_m,G_1*G_2})=m$. It will be interesting to have a full characterization of such graphs $G_1$ and $G_2$. More precisely, we pose the following two questions. The answer to the first one also gives a partial answer to the general case of Question~\ref{K_m,reg=m-1}.

\begin{Question}\label{join-m,m-1}
 Is there an explicit combinatorial characterization for all graphs $G_1$ and $G_2$ with $n_1$ and $n_2$ vertices, respectively and $2\leq m<n_1+n_2$ such that $\reg(S/J_{K_m,G_1*G_2})=m-1$.
\end{Question}

 \begin{Question}\label{join-m,m}
 	Is there an explicit combinatorial characterization for all graphs $G_1$ and $G_2$ with $n_1$ and $n_2$ vertices, respectively and $2\leq m<n_1+n_2$ such that $\reg(S/J_{K_m,G_1*G_2})=m$.
 \end{Question}



\end{document}